\def\DefineSymbol#1#2{\newcommand{#1}{{\mathrm {#2}}}}
\def\DefineCategory#1#2{\newcommand{#1}{{\mathrm {#2}}}}
\theoremstyle{plain}
	\newtheorem{theorem}{Theorem}[section]
	\newtheorem{lemma}[theorem]{Lemma}
	\newtheorem{corollary}[theorem]{Corollary}
\theoremstyle{definition}
	\newtheorem{definition}[theorem]{Definition}
	\newtheorem{example}[theorem]{Example}
\theoremstyle{remark}
	\numberwithin{equation}{section}
\DefineSymbol{\pr}{pr}
\DefineSymbol{\id}{id}
\DefineSymbol{\const}{const}
\DefineSymbol{\op}{op}
\DefineSymbol{\diag}{diag}
\DeclareMathOperator{\Nm}{Nm}
\DeclareMathOperator{\Sym}{Sym}
\DeclareMathOperator{\Spec}{Spec}
\DeclareMathOperator{\End}{End}
\DefineCategory{\Set}{Set}
\DefineCategory{\Ab}{Ab}
\DefineCategory{\Mod}{Mod}
\DefineCategory{\Alg}{Alg}
\DefineCategory{\Ch}{Ch}
\DefineCategory{\Mon}{Mon}
\DefineCategory{\CMon}{CMon}
\DefineCategory{\Sm}{Sm}
\DefineCategory{\Cor}{Cor}
\DefineCategory{\QCor}{QCor}
\DefineSymbol{\red}{red}
\DefineSymbol{\tr}{tr}
\DeclareMathOperator{\ord}{ord}
\DeclareMathOperator{\length}{length}
\DeclareMathOperator{\Tr}{Tr}
\begin{document}

\title{Transfers on commutative group schemes}
\author{Junnosuke Koizumi}
\date{}
\address{Graduate School of Mathematical Sciences, University of Tokyo, 3-8-1 Komaba, Meguro-ku, Tokyo 153-8914, Japan}
\email{jkoizumi@ms.u-tokyo.ac.jp}
\subjclass{Primary 14F42; Secondary 14L15}
\maketitle
\begin{abstract}
We prove that any commutative group scheme separated over a noetherian normal scheme admits a canonical structure of a presheaf with transfers, which is characterized by a simple condition on radicial transfers.
\end{abstract}
\setcounter{tocdepth}{1}

\tableofcontents

\section*{Introduction}

Voevodsky's theory of presheaves with transfers (see for example \cite[Part 1]{MVW}) plays a central role in his theory of motives over a field.
Roughly speaking, they are presheaves on the category of smooth schemes equipped with covariant ``transfer maps'' for finite surjective morphisms, functorial in an appropriate sense.
One important example is commutative group schemes, e.g. $\mathbb{G}_a$, $\mathbb{G}_m$, the Witt ring scheme and abelian varieties.
In particular, smooth commutative group schemes over a field equipped with transfers are known to give examples of \emph{reciprocity sheaves} (see \cite{KSY1}, \cite{KSY2}), for which a nice motivic theory can be applied.

On the other hand, transfer structures on group schemes over a \emph{general base scheme} has not been studied much yet.
In this paper we prove the following general existence result.

\begin{theorem}[see Theorem \ref{thm:main}]\label{intro_main}
	Let $S$ be a noetherian normal scheme and $G$ be a separated commutative group scheme over $S$.
	Then there exists a canonical structure of a presheaf with transfers over $S$ on $G$.
\end{theorem}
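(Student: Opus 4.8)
The plan is to regard a presheaf-with-transfers structure on $G$ as an extension of the representable presheaf $h_G = G(-)$ on $\Sm_S$ to an additive presheaf on the category $\Cor_S$ of finite correspondences over $S$; equivalently, to supply a transfer map $Z^*\colon G(Y)\to G(X)$ for every finite correspondence $Z$ from $X$ to $Y$, functorial in $Z$ and reducing to $\phi\mapsto \phi\circ f$ on graphs of morphisms $f\colon X\to Y$. The first point, and the place where separatedness of $G$ is essential, is that for $X$ connected and normal with function field $K=k(X)$ the restriction $G(X)\to G(K)$ is injective, since two $S$-morphisms $X\to G$ agreeing on the dense point $\Spec K$ must agree when $G$ is separated. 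Hence any extension of $h_G$ to $\Cor_S$ is determined by its effect on generic points, which already accounts for the word \emph{canonical}; the substance is existence together with functoriality.

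For the construction I would use the theory of relative cycles. If $Z$ is an \emph{effective} finite correspondence from a connected normal $X$ to $Y$, of degree $d$ over $X$, then normality of $X$ lets one view $Z$ as a family of $0$-cycles on $Y$ parametrized by $X$, i.e.\ (after reducing to quasi-projective $Y$ and using that $Z$ is finite over $X$) a morphism $c_Z\colon X\to \Sym^d_S(Y)$; this is the input from relative cycles (cf.\ \cite{MVW}). Given $\phi\in G(Y)=\Hom_S(Y,G)$ one forms $\Sym^d_S(\phi)\colon \Sym^d_S(Y)\to\Sym^d_S(G)$, and since $G$ is a commutative group scheme the iterated addition $G^{\times_S d}\to G$ is $\Sigma_d$-invariant and factors through a sum morphism $\sigma_d\colon\Sym^d_S(G)\to G$; one then sets $Z^*(\phi):=\sigma_d\circ\Sym^d_S(\phi)\circ c_Z\in\Hom_S(X,G)=G(X)$ and extends $\Zbb$-linearly over non-effective $Z$. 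Restricting to $\Spec K$ one checks that $Z^*(\phi)=\sum_i n_i\,\Nm_{E_i/K}(\phi(p_i))$, where $Z_K=\sum_i n_i[p_i]$ with residue fields $E_i=\kappa(p_i)$ and $\Nm_{E/K}\colon G(E)\to G(K)$ is the norm of the finite field extension $E/K$, obtained by summing the $\Hom_K(E,\bar{K})$-conjugates of $\phi(p_i)$ in $G(\bar{K})$ with multiplicity the inseparable degree and descending to $G(K)$ by separatedness. In particular, for $E/K$ purely inseparable $\Nm_{E/K}$ is $[E:K]$ times the canonical ``Frobenius-descent'' identification: this is precisely the simple condition on radicial transfers in the statement, and the uniqueness clause amounts to a lemma that every finite correspondence becomes, after a radicial base change, a $\Zbb$-combination of graphs and transfers along finite \emph{flat} maps, on which the transfer is forced.

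The remaining task, and the main one, is functoriality: $(Z_2\circ Z_1)^*=Z_1^*\circ Z_2^*$ and $\id^*=\id$, so that we genuinely obtain a functor $\Cor_S\to\Ab$ restricting to $h_G$ on $\Sm_S$. For graphs this collapses as noted; in general it reduces to a compatibility between composition of relative $0$-cycles (fibre products and proper pushforward of cycles) and the norm/sum morphisms $\sigma_d$ — that the tower $\{\Sym^d_S(G)\}_d$ with its maps $\{\sigma_d\}$ interacts co-multiplicatively with the composition law on correspondences. I expect this to be the principal obstacle, and the step most sensitive to the generality of the hypotheses: over a field of characteristic zero or a perfect base the fibres of the relevant $0$-cycles are étale and the bookkeeping is classical, whereas over a merely normal base in positive characteristic the fibres are genuinely non-reduced, the maps $c_Z$ are more delicate, and the interaction of $\Sym^d$ with Frobenius must be tracked — which is exactly why the outcome must be characterized by its radicial transfers. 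A secondary difficulty is the mere existence of $\Sym^d_S(G)$ and of $\sigma_d$ when $G$ is separated but possibly neither of finite type nor quasi-projective; I would handle this by localizing $S$ and $G$ around the quasi-compact image of the cycle at hand, reducing to a quasi-projective situation, and then checking independence of all choices.
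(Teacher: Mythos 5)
Your proposal stops short of the step that actually constitutes the theorem: functoriality. You write down a candidate transfer $Z^*$ and then say that $(Z_2\circ Z_1)^*=Z_1^*\circ Z_2^*$ is ``the main remaining task'' and ``the principal obstacle'', with only a heuristic about compatibility of $\Sym^d$ with composition of cycles. In the paper this is where essentially all the work lies: one reduces to geometric generic points using the injectivity of $G(X)\to G(\bar{k(X)})$ for $X$ normal (your first observation, via separatedness of $G$), then flattens a relative cycle by a dominant blow-up (platification of $\mathbb{Z}$-universal cycles, Lemma \ref{flattening}), and finally computes, over an algebraically closed field, the pushforward along a finite scheme as a sum of fibre contributions weighted by the lengths of the local rings (Lemmas \ref{lem:reduction_scheme} and \ref{lem:coproduct}, Corollary \ref{cor:alg_cld}); these ingredients are exactly what controls the non-reduced fibres in positive characteristic that you flag but do not handle. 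Without an argument of this type, what you have is a construction together with a statement of the remaining problem, not a proof.

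There is also a gap at the level of the construction itself. Forming $c_Z\colon X\to\Sym^d_S(Y)$ and $\sigma_d\colon\Sym^d_S(G)\to G$ requires these quotients to exist as schemes, which in practice means quasi-projectivity; but objects of $\Sm_S$ are only assumed smooth, separated and of finite type, and $G$ is merely separated (not even of finite type), so $\Sym^d_S(G)$ need not exist, and the Suslin--Voevodsky identification of degree-$d$ effective finite correspondences out of a normal $X$ with morphisms $X\to\Sym^d_S(Y)$ carries hypotheses (quasi-projectivity of $Y$, and delicacies in positive characteristic) that you do not secure; your proposed fix by ``localizing to a quasi-projective situation'' is asserted, not carried out. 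The paper sidesteps both issues: it never forms $\Sym^d_S(G)$ (the $S_d$-invariant addition map $V\times_X\cdots\times_X V\to G$ is descended directly to $\sigma(g)$), and instead of $\Sym^d_S(Y)$ it uses the relative symmetric product $\Sym^d_X(V)$ of a scheme $V$ finite over $X$, which always exists since $V\to X$ is affine, together with the canonical section $f^\sharp\colon X\to\Sym^d_X(V)$ defined in the finite locally free case by the action of $(f_*\mathcal{O}_V)^{\odot d}$ on $\wedge^d f_*\mathcal{O}_V$ and extended over normal $X$ from a dense open of flatness by finiteness of $\Sym^d_X(V)$ over $X$. Your closing remarks on radicial transfers are consistent with the paper's Section 4, but that characterization presupposes the canonical transfer already constructed and proved functorial, so it cannot substitute for the missing steps.
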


One way to prove that a presheaf has a transfer structure is to show that it is a qfh sheaf.
In this way, Ancona-Huber-Lehalleur \cite{AHL} proved that for any smooth commutative group scheme $G$ over a noetherian excellent scheme $S$, the presheaf $G_\mathbb{Q}=G\otimes_\mathbb{Z}\mathbb{Q}$ admits a unique transfer structure.
However, being a qfh sheaf is so strong a condition that we cannot expect in general that $G$ itself should be so.
Also, transfers constructed in this way are comparatively inexplicit and difficult to compute.

Another way is to use the symmetric product and construct transfers geometrically.
In this way, Spie\ss-Szamuely \cite{SS03} and Barbieri-Viale-Kahn \cite[Lemma 1.4.4]{BVK16} proved that any commutative group scheme locally of finite type over a field admits a canonical transfer structure.
Our proof of Theorem \ref{intro_main} is based on this idea.
In section 2, we imitate the construction of locally free transfers due to Spie\ss-Szamuely in a more general setting.
In section 3, we define the canonical transfer on group schemes and prove the functoriality to establish Theorem \ref{intro_main}.
In section 4, we characterize the canonical transfer by a simple condition on radicial transfers.

\subsection*{Acknowledgements}
I am grateful to Shuji Saito for his support in my studies.
I also thank Hiroyasu Miyazaki for many interesting discussions.

\section{Review of relative cycles}

In this section we recall the theory of relative cycles in the style of Cisinski-D\'eglise \cite{CD}; nothing in this section is our original.
Let $X$ be a noetherian scheme.
A {\it cycle} on $X$ is a formal $\mathbb{Z}$-linear combination of integral closed subschemes of $X$.
A {\it component} of a cycle $\alpha$ is an integral closed subscheme whose coefficient in $\alpha$ is non-zero.
For a closed subscheme $W$ of $X$, we define the associated cycle $[W]$ by
$$
	\textstyle [W]=\sum_{i=1}^n \length(\mathcal{O}_{X,W_i})[W_i]
$$
where $W_1,\dots,W_n$ are the irreducible components of $W$.
If $f\colon X\to Y$ is a morphism between noetherian schemes and $\alpha$ is a cycle on $X$, then the cycle $f_*\alpha$ on $Y$ is defined by linearly extending
$$
	[V]\mapsto
	\begin{cases}
		[k(\xi):k(f(\xi))][\overline{\{f(\xi)\}}]		&([k(\xi):k(f(\xi))]<\infty)\\
		0							&(\text{otherwise})
	\end{cases}
$$
where $\xi$ is the generic point of $V$.

Until the end of this section, we fix a noetherian base scheme $S$ and consider only noetherian schemes over $S$.

\begin{definition}

Let $X$ be a noetherian $S$-scheme and $\alpha$ a cycle on $X$.
\begin{itemize}
	\item	We say that $\alpha$ is {\it finite} over $S$ if every component of $\alpha$ is finite over $S$.
	\item	We say that $\alpha$ is {\it flat} over $S$ if every component of $\alpha$ is flat over $S$.
	\item	We say that $\alpha$ is {\it pseudo-dominant} over $S$ if every component of $\alpha$ is dominant over some irreducible component of $S$.
\end{itemize}
\end{definition}

Consider the following diagram of noetherian schemes.
\begin{align}\label{pullback_diagram}
	\xymatrix{
			&X\ar[d]\\
	Y\ar[r]	&S
	}
\end{align}
Let $\alpha$ (resp. $\beta$) be a cycle on $X$ (resp. $Y$).
If $\alpha$ is {\it $\mathbb{Z}$-universal} over $S$ in the sense of \cite{CD}, then a cycle $\alpha\otimes_S \beta$ on $X\times_SY$ called the {\it pullback} of $\alpha$ by $\beta$ is defined.
The operation ${-}\otimes_S{-}$ is bilinear.

\begin{lemma}\label{lem:cycle_property}
	Let $X$ be a noetherian $S$-scheme and $\alpha$ be a cycle on $X$.
	Then $\alpha$ is finite and flat over $S$ $\implies$ $\mathbb{Z}$-universal over $S$ $\implies$ pseudo-dominant over $S$.
\end{lemma}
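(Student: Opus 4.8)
The plan is to treat the two implications separately, reducing each (using bilinearity of ${-}\otimes_S{-}$, and the fact that finiteness, flatness and pseudo-dominance are all conditions on the individual components) to the case $\alpha=[Z]$ for a single integral closed subscheme $Z\subseteq X$ whenever convenient. Both statements go back to Suslin--Voevodsky and are available in \cite{CD}, so the proof is largely a matter of extracting them; below I indicate the arguments I would give.

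\emph{Finite and flat $\implies$ $\mathbb{Z}$-universal.} Let $Z\subseteq X$ be integral and flat over $S$; finiteness is not needed for this step. For any $Y\to S$ the base change $Z\times_S Y$ is again flat over $Y$, and the natural candidate for the pullback is the associated cycle $[Z\times_S Y]$. To identify this family of cycles with the $\mathbb{Z}$-universal pullback $[Z]\otimes_S Y$ one checks two things: (i) compatibility with composition $Y'\to Y\to S$, which is immediate from $(Z\times_S Y)\times_Y Y'=Z\times_S Y'$; and (ii) that $[Z\times_S Y]$ carries the correct multiplicities for \emph{every} base change $Y\to S$, not merely flat ones — here the flatness of $Z$ over $S$ is exactly what prevents $Z\times_S Y$ from acquiring embedded components or incorrect local lengths. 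This is precisely the assertion that a flat cycle is $\mathbb{Z}$-universal, for which I would cite the corresponding statement in \cite{CD}; the underlying local input is the behaviour of $\length$ under flat base change.

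\emph{$\mathbb{Z}$-universal $\implies$ pseudo-dominant.} Suppose $\alpha$ is $\mathbb{Z}$-universal but has a component $Z$ that is not pseudo-dominant. Let $\xi$ be the generic point of $Z$ and $s$ its image in $S$, so that $s$ is not a generic point of $S$, and let $\eta_1,\dots,\eta_m$ be the generic points of $S$. For each $j$, the pullback $\alpha\otimes_S\Spec\mathcal{O}_{S,\eta_j}$ is supported on the base change of the support of $\alpha$, and the contribution coming from $Z$ is $Z\times_S\Spec\mathcal{O}_{S,\eta_j}$, which is empty because $s$ is not a generic point of $S$ and hence $\eta_j$ does not lie in the closure $\overline{\{s\}}$ of the image of $Z$. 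Thus none of the pullbacks of $\alpha$ to the localizations of $S$ at its generic points involves $Z$. But a $\mathbb{Z}$-universal cycle, being a relative cycle, is determined by precisely these pullbacks; hence the multiplicity of $Z$ in $\alpha$ would have to be $0$, contradicting that $Z$ is a component. (With the notion of relative cycle adopted in \cite{CD}, for which $\mathbb{Z}$-universal cycles are pseudo-dominant by construction, this implication is immediate.)

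The step I expect to be the genuine obstacle is the first implication. ``$\mathbb{Z}$-universal'' is a condition about pullback along \emph{all} morphisms $Y\to S$, including non-flat ones, and verifying that the naive flat pullback $[Z\times_S Y]$ really is the canonical pullback, with integral rather than merely rational coefficients, is exactly the technical content one has to take from \cite{CD}. The second implication, by contrast, is essentially bookkeeping with generic points together with the fibrewise description of relative cycles.
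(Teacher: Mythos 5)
Your proposal is correct and in substance coincides with the paper's proof, which disposes of both implications by appealing to Cisinski--D\'eglise: the paper's entire argument is that the statement follows directly from the definition of $\mathbb{Z}$-universal cycles \cite[Definition 8.1.47]{CD}, and your treatment likewise defers the first implication (flat cycles are $\mathbb{Z}$-universal) to the corresponding statement in \cite{CD} and notes that the second is built into the definition. One caveat: your by-contradiction argument for the second implication is not an independent proof, since the claim that a relative cycle is determined by its pullbacks over the generic points of $S$ already presupposes pseudo-dominance (a cycle concentrated over a non-generic point has all such pullbacks zero); the parenthetical remark that pseudo-dominance is part of the definition of a relative ($\mathbb{Z}$-universal) cycle in \cite{CD} is the actual argument, exactly as in the paper.
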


\begin{proof}
	This follows directly from the definition of $\mathbb{Z}$-universal cycles; see \cite[Definition 8.1.47]{CD}.
\end{proof}

\begin{lemma}\label{lem:pullback_lemma}
	Consider the diagram (\ref{pullback_diagram}) of noetherian schemes.
	Let $\alpha$ be a cycle on $X$ finite and flat over $S$ and write $\alpha = \sum_{i=1}^n m_i[V_i]$.
	Then we have
	$$
		\textstyle\alpha\otimes_S[Y] = \sum_{i=1}^{n}m_i[V_i\times_SY].
	$$
\end{lemma}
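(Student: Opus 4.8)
The plan is to reduce to a single integral cycle by bilinearity, and then to invoke the fact---part of the foundational theory of \cite{CD}---that the pullback of a cycle which is \emph{flat} over the base is computed by naive scheme-theoretic base change, with no specialization correction. First one should note that $\alpha\otimes_S[Y]$ is defined at all: by Lemma \ref{lem:cycle_property} a finite flat cycle is $\mathbb{Z}$-universal.

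Since ${-}\otimes_S{-}$ is bilinear, I would first reduce to the case $\alpha=[V]$ for a single integral closed subscheme $V\subseteq X$ finite and flat over $S$, where the claim becomes $[V]\otimes_S[Y]=[V\times_S Y]$; here $V\times_S Y$ denotes the scheme-theoretic fibre product and $[-]$ the associated cycle in the sense of the length formula fixed at the start of this section. Summing over the components of $\alpha$ then yields the lemma. One also sees at this point that each $V_i\times_S Y$ is finite over $Y$, so the right-hand side is again a finite cycle over $Y$.

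For the reduced statement, the key point is that the delicate aspect of pulling a relative cycle back along $Y\to S$---passing to closures of the fibres over the generic points of $S$ and correcting by specialization multiplicities---is absent for cycles that are genuinely flat over $S$. Since $V\to S$ is flat, the base change $V\times_S Y\to Y$ is again flat, and the relative cycle $[V]$ pulls back to the associated cycle of $V\times_S Y$; this is precisely the compatibility of $\otimes_S$ with flat cycles recorded in \cite[Section 8.1]{CD}. The only thing requiring care is the bookkeeping of multiplicities: I would check that the length-based convention for $[V\times_S Y]$ used in this paper agrees with the convention under which \cite{CD} phrases its flat base-change formula. As $V$ is finite over $S$, every component of $V\times_S Y$ is finite over $Y$ and its multiplicity is simply $\length(\mathcal{O}_{V\times_S Y,\,-})$, so the conventions coincide and this last step presents no real obstacle---after the reduction to the integral case, the proof is essentially an invocation of \cite{CD}.
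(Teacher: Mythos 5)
Your argument is correct and is essentially the paper's own proof: the paper simply cites the flat base-change property \cite[8.1.35 (P3)]{CD}, which is exactly the compatibility of $\otimes_S$ with flat cycles that you invoke after the (harmless) reduction by bilinearity. The extra checks you mention (that $\alpha\otimes_S[Y]$ is defined via Lemma \ref{lem:cycle_property}, and that the length conventions match) are fine but not needed beyond what the citation already covers.
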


\begin{proof}
		See \cite[8.1.35 (P3)]{CD}.
\end{proof}

\begin{lemma}\label{flattening}
	Suppose that $S$ is reduced.
	Let $X$ be a noetherian $S$-scheme and $\alpha$ be a cycle on $X$ $\mathbb{Z}$-universal over S.
	Then there exists a dominant blow-up $p\colon S'\to S$ such that $\alpha\otimes_S[S']$ is flat over $S'$.
\end{lemma}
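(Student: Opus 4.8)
\noindent
This is a relative flattening statement, and the plan is to combine Grothendieck's generic flatness with the flattening-by-blow-up theorem of Raynaud--Gruson. Write $\alpha=\sum_{i=1}^n m_i[V_i]$ with the $V_i$ integral closed subschemes of $X$; by Lemma~\ref{lem:cycle_property} the cycle $\alpha$ is pseudo-dominant over $S$, so each $V_i$ maps dominantly onto some irreducible component of $S$ and is the closure of $V_i\cap(X\times_S U)$ for every dense open $U\subseteq S$. We may assume each $V_i$, hence $X$, is of finite type over $S$ (this is part of the setting of \cite{CD}). Since $S$ is reduced and noetherian, generic flatness produces a dense open $U\subseteq S$ such that every $V_i$ is flat over $U$.

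Next I would apply the flattening theorem of Raynaud--Gruson to the finite-type $S$-scheme $V:=\coprod_{i=1}^n V_i$, which is flat over $U$. This yields a $U$-admissible blow-up $p\colon S'=\Bl_Z S\to S$, with $Z\subseteq S$ a closed subscheme disjoint from $U$, such that the strict transform $V'=\coprod_{i=1}^n V_i'$ of $V$ along $p$ is flat over $S'$. As $|Z|$ is contained in the closed nowhere-dense set $S\setminus U$, the morphism $p$ is a dominant blow-up, and it restricts to an isomorphism over the dense open $p^{-1}(U)\subseteq S'$.

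It then remains to check that $\alpha\otimes_S[S']=\sum_{i=1}^n m_i[V_i']$, which is flat over $S'$ by construction. Pullback of relative cycles commutes with restriction to open subschemes of the base, and over $p^{-1}(U)\xrightarrow{\sim}U$ the morphism $p$ is an isomorphism, so $\alpha\otimes_S[S']$ restricts there to the transport of $\alpha|_{X\times_S U}=\sum_i m_i[V_i\cap(X\times_S U)]$; the strict transform $V_i'$ likewise restricts to $V_i\cap(X\times_S U)$ over $p^{-1}(U)$, so $\sum_i m_i[V_i']$ has the same restriction. Now $\alpha\otimes_S[S']$ is again $\mathbb{Z}$-universal over $S'$, hence pseudo-dominant over $S'$ by Lemma~\ref{lem:cycle_property}, and $\sum_i m_i[V_i']$ is pseudo-dominant over $S'$ as well; since a pseudo-dominant cycle over $S'$ is determined by its restriction to the dense open $p^{-1}(U)$, the two cycles agree.

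The two substantial inputs, generic flatness over a reduced noetherian base and the Raynaud--Gruson flattening theorem, are standard; the remaining work is bookkeeping within the formalism of \cite{CD} --- chiefly the reduction to finite-type $V_i$ and the verification in the previous paragraph that $\otimes_S[S']$ agrees with the naive strict transform over the flat locus. I expect this last identification to be the fussiest point, though it is purely formal once one has the compatibility of $\otimes_S$ with restriction to opens and with transport along isomorphisms.
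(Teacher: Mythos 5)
Your argument is, in substance, the same one that lies behind the paper's proof: the paper simply cites \cite[Lemma 8.1.18 and 8.1.35 (P5)]{CD}, and the proof of that cited result (going back to Suslin--Voevodsky's platification theorem) is exactly your combination of generic flatness, Raynaud--Gruson flattening by a $U$-admissible blow-up, and the identification of $\alpha\otimes_S[S']$ with the strict-transform cycle. Your verification of that identification is also fine as far as it goes: both cycles are pseudo-dominant over $S'$, the open $p^{-1}(U)$ contains every generic point of $S'$ because the center of the blow-up misses the dense open $U$ and the exceptional locus is a Cartier divisor, and a pseudo-dominant cycle is recovered from its restriction over such an open by taking closures of components. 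The remaining inputs (base-change stability of $\mathbb{Z}$-universality, associativity of $\otimes$, compatibility with open immersions and isomorphisms) are formal properties from \cite{CD}, which is acceptable given that the lemma itself is a citation in the paper.

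The one point you should not pass over so quickly is the finite-type hypothesis. Both generic flatness and Raynaud--Gruson require the schemes being flattened to be of finite type (finite presentation) over $S$; neither is available for a general noetherian $S$-scheme. The lemma as stated in the paper imposes no finite-type condition on $X$ or on the components $V_i$, so your parenthetical ``we may assume each $V_i$ is of finite type over $S$'' is not a reduction --- it is an additional hypothesis, and the appeal to ``the setting of \cite{CD}'' is exactly the content that the citation is supposed to supply (in \cite{CD} the relevant finiteness is built into the statement being cited). In practice this is harmless: what matters is the support of $\alpha$, not the ambient $X$, and everywhere the lemma is used in this paper the components of $\alpha$ are finite over the base (they come from finite correspondences), hence certainly of finite type, so your proof covers all the cases that are actually needed. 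But if you want your argument to stand as a proof of the lemma as literally stated, you must either add the finite-type hypothesis on the support of $\alpha$ or explain how to remove it; the blow-up strategy alone does not do so.
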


\begin{proof}
	See \cite[Lemma 8.1.18 and 8.1.35 (P5)]{CD}.
\end{proof}

\begin{definition}
For a noetherian $S$-scheme $X$, we define $c_0(X/S)$ to be the abelian group of cycles on $X$ finite and $\mathbb{Z}$-universal over $S$.
An element of $c_0(X/S)$ is called a \emph{relative $0$-cycle} on $X$ over $S$.
\end{definition}

Let $X,Y,Z$ be noetherian $S$-schemes.
We set $c_S(X,Y):=c_0(X\times_SY/X)$ and call its elements {\it finite correspondences} from $X$ to $Y$ over $S$.
For example, the graph $\Gamma_f$ of a morphism $f\colon X\to Y$ over $S$ gives an element $[\Gamma_f] \in c_S(X,Y)$, for which we simply write $f$.
For $\alpha\in c_S(X,Y)$ and $\beta\in c_S(Y,Z)$, we define a cycle $\beta\circ\alpha$ on $X\times_SZ$ by the formula
$$
	\beta\circ\alpha = {\pr_{13}}_*(\beta\otimes_Y\alpha)
$$
where ${\pr_{13}}_*\colon X\times_S Y\times_S Z\to X\times_S Z$ is the canonical projection.
By the definition of pullback (cf. \cite[Theorem 8.1.39]{CD}) and \cite[Corollary 8.2.6]{CD}, we have $\beta\otimes_Y\alpha\in c_0(X\times_SY\times_SZ/X)$.
If $Z$ is separated and of finite type over $S$, then it follows from \cite[Section 9.1.1]{CD} that $\beta\circ \alpha \in c_S(X,Z)$.
This gives a bilinear pairing
$$
	\circ\colon c_S(X,Y)\times c_S(Y,Z)\to c_S(X,Z).
$$

\begin{lemma}\label{lem:composition_formula}
	Let $S$ be a noetherian scheme and $X,Y,Z,W$ be noetherian $S$-schemes.
	\begin{enumerate}
		\item	Let $\alpha\in c_S(X,Y)$, $\beta\in c_S(Y,Z)$ and $\gamma \in c_S(Z,W)$.
				Suppose that $Z,W$ are separated and of finite type over $S$.
				Then we have $$(\gamma\circ \beta)\circ \alpha = \gamma\circ (\beta\circ \alpha).$$
		\item	Let $f\colon X\to Y$ be an $S$-morphism and $\beta\in c_S(Y,Z)$.
				Suppose that $Z$ is separated and of finite type over $S$.
				Then we have $$\beta\circ f = \beta\otimes_Y[X].$$
	\end{enumerate}
\end{lemma}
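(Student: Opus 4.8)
The plan is to deduce both parts from the general compatibility properties of the pullback ${-}\otimes_{-}{-}$ on relative cycles recorded in \cite{CD}: its ``associativity'' for iterated pullbacks, its compatibility with proper pushforward in each of the two variables, and the stability of finiteness and $\mathbb{Z}$-universality under pullback (\cite[Corollary 8.2.6]{CD}). Granting these, both proofs become bookkeeping with projections. Nothing deep is proved here; the only points that require care are to check that every intermediate cycle that appears is $\mathbb{Z}$-universal, so that the iterated pullbacks are defined, and that every pushforward occurring is along a morphism that is proper --- indeed finite --- on the support of the cycle being pushed, so that it is defined and lands in the expected group $c_S(-,-)$. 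This last verification reduces, through the finiteness clause in the definition of $c_S(-,-)$, precisely to the hypotheses that $Z$, resp.\ $Z$ and $W$, be separated and of finite type over $S$; this is what I would single out as the main (though not serious) obstacle.

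For (1), put $P=X\times_SY\times_SZ\times_SW$. By the associativity of $\otimes$ together with \cite[Corollary 8.2.6]{CD} the iterated pullback $\gamma\otimes_Z\beta\otimes_Y\alpha$ is a well-defined cycle on $P$, and $(\gamma\otimes_Z\beta)\otimes_Y\alpha=\gamma\otimes_Z(\beta\otimes_Y\alpha)$. Unwinding the definition of $\circ$, the cycle $\gamma\circ(\beta\circ\alpha)$ is obtained from $\gamma\otimes_Z\beta\otimes_Y\alpha$ by first pushing the factor $\beta\otimes_Y\alpha$ forward along $X\times_SY\times_SZ\to X\times_SZ$ (forgetting the $Y$-factor), then pulling back $\gamma$ over $Z$, then pushing forward along $X\times_SZ\times_SW\to X\times_SW$ (forgetting the $Z$-factor). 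Since ${-}\otimes_Z{-}$ commutes with proper pushforward in the base variable, the first pushforward may be carried past the pullback of $\gamma$; combined with functoriality of pushforward this expresses $\gamma\circ(\beta\circ\alpha)$ as the pushforward of $\gamma\otimes_Z\beta\otimes_Y\alpha$ along the projection $P\to X\times_SW$ that forgets the $Y$- and $Z$-factors. Running the analogous manipulation for $(\gamma\circ\beta)\circ\alpha$ --- contracting the $Z$-factor first (in forming $\gamma\circ\beta$) and the $Y$-factor afterwards, and invoking the associativity identity above --- yields the \emph{same} pushforward of the \emph{same} cycle. Hence the two are equal.

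For (2), let $\gamma_f=(\id_X,f)\colon X\to X\times_SY$ be the graph morphism; it is an immersion carrying $X$ isomorphically onto $\Gamma_f$, so that $[\Gamma_f]={\gamma_f}_*[X]$, and it satisfies $\pr_Y\circ\gamma_f=f$, i.e.\ it is a morphism over $Y$. By definition $\beta\circ f={\pr_{13}}_*(\beta\otimes_Y[\Gamma_f])$, a cycle on $X\times_SZ$. By compatibility of ${-}\otimes_Y{-}$ with pushforward in the base variable, $\beta\otimes_Y[\Gamma_f]=\beta\otimes_Y{\gamma_f}_*[X]=j_*(\beta\otimes_Y[X])$, where $j\colon X\times_SZ\to X\times_SY\times_SZ$ is the base change of $\gamma_f$ over $Y$, given in coordinates by $(x,z)\mapsto(x,f(x),z)$. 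Since $\pr_{13}\circ j=\id_{X\times_SZ}$, applying ${\pr_{13}}_*$ returns $\beta\otimes_Y[X]$, which is the claim.
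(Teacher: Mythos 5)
Your argument is correct and is essentially the standard proof: reduce everything to the associativity of the iterated pullback $\gamma\otimes_Z\beta\otimes_Y\alpha$, its compatibility with proper (here finite-on-supports) pushforward, and the stability of finiteness and $\mathbb{Z}$-universality, with the separatedness/finite-type hypotheses on $Z$ and $W$ entering exactly where you say. The paper simply cites \cite[Proposition 9.1.7]{CD} (noting the proof works without assuming $X,Y$ separated of finite type), and your write-up is a faithful reconstruction of that same argument rather than a different route.
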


\begin{proof}
	See \cite[Proposition 9.1.7]{CD}.
	In loc.cit., it is assumed that $X,Y,Z,W$ are all separated and of finite type over $S$, but the proof works verbatim.
\end{proof}

Let $\Sm_S$ denote the category of smooth separated $S$-schemes of finite type.
Using the pairing above as composition, we can define an additive category $\Cor_S$ whose objects are the same as $\Sm_S$ and the morphisms are finite correspondences.
The category $\Sm_S$ can be embedded into $\Cor_S$ by the graph construction.
A presheaf $\Cor_S^\op\to \Ab$ is called a \emph{presheaf with transfers} over $S$.

\section{Locally free transfers}

In this section we construct transfers for finite locally free morphisms.

Let $A$ be a ring and $B$ a finite $A$-algebra.
We write $B^{\odot d}$ for the $A$-algebra $(B^{\otimes d})^{S_d}$, i.e. the subalgebra of $B^{\otimes d}$ fixed under the canonical action of the symmetric group $S_d$.

Suppose that $B$ is free of rank $d$ as an $A$-module.
Let $e_1,\dots,e_d$ be a basis of $B$ over $A$ and let $I_{[m,n]}^d$ denote the set of $S_d$-orbits in $\{m,m+1,\dots,n\}^d$.
For an orbit $\Gamma\in I_{[1,d]}^d$, we set
$$
	\textstyle e_\Gamma := \sum_{(i_1,\dots,i_d)\in \Gamma}e_{i_1}\otimes \dots\otimes e_{i_d}.
$$
Then $\{e_\Gamma\}_{\Gamma\in I_{[1,d]}^d}$ is a basis of $B^{\odot d}$ over $A$.
We define an action of $B^{\odot d}$ on $\wedge^d B$ by
$$
	\textstyle e_\Gamma\cdot (b_1\wedge\dots\wedge b_d) = \sum_{(i_1,\dots,i_d)\in \Gamma} e_{i_1}b_1\wedge\dots\wedge e_{i_d}b_d.
$$
This is well-defined since we are taking a sum over all possible permutations of $(i_1,\dots,i_d)$.
In this way we obtain a morphism of $A$-algebras
$$
	u\colon B^{\odot d}\to \End_A(\wedge^dB)\simeq A.
$$
One can easily verify that this does not depend on the choice of the basis.

\begin{lemma}\label{lem:reduction}
	Let $K$ be a field and $B$ be a finite local $K$-algebra of dimension $d$ with residue field $K$.
	Let $\varphi$ denote the composition $B^{\otimes d}\xrightarrow{\cdot} B\twoheadrightarrow K$. 
	Then the restriction of $\varphi$ to $B^{\odot d}$ equals $u\colon B^{\odot d}\to K$.
\end{lemma}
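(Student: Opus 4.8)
The plan is to sidestep any direct computation of $u$ on the basis $\{e_\Gamma\}$ and instead use the following structural observation: both $u$ and $\varphi|_{B^{\odot d}}$ are $K$-algebra homomorphisms $B^{\odot d}\to K$ (for $u$ this is part of its construction with $A=K$; for $\varphi$ it is the composite of the $K$-algebra multiplication map $B^{\otimes d}\to B$ with the residue map $B\to K$). Hence it suffices to show that $B^{\odot d}$ admits \emph{only one} $K$-algebra homomorphism to $K$. This in turn follows once we know that $B^{\odot d}$ is a local $K$-algebra with residue field $K$: for a local ring $(R,\mathfrak{n})$ with $R/\mathfrak{n}=K$, any $K$-algebra map $R\to K$ has kernel a maximal ideal, hence equal to $\mathfrak{n}$, and therefore coincides with the quotient $R\to R/\mathfrak{n}=K$.

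First I would record that $B^{\otimes d}$ is again a finite local $K$-algebra with residue field $K$, by induction on $d$. For the inductive step, if $B=K\oplus\mathfrak{m}$ and $B'=K\oplus\mathfrak{m}'$ are finite local $K$-algebras with residue field $K$, then in $B\otimes_K B'$ the ideal $J=\mathfrak{m}\otimes_K B'+B\otimes_K\mathfrak{m}'$ has quotient $K$, and it is nilpotent: since $(\mathfrak{m}\otimes_K B')^i(B\otimes_K\mathfrak{m}')^j=\mathfrak{m}^i\otimes_K\mathfrak{m}'^j$, the identity $(I+I')^n=\sum_{i+j=n}I^iI'^j$ for ideals gives $J^{a+b-1}=0$ whenever $\mathfrak{m}^a=0$ and $\mathfrak{m}'^b=0$. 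So $J$ is the unique maximal ideal and the residue field is $K$.

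Next I would show that the $K$-subalgebra $B^{\odot d}=(B^{\otimes d})^{S_d}$ inherits these properties. It is finite over $K$, being a $K$-subspace of a finite-dimensional one. For locality, let $N$ be the maximal ideal of $B^{\otimes d}$ (so $N$ consists of the nilpotent elements) and take $x\in B^{\odot d}$. If $x\in N$ then $x$ is nilpotent. Otherwise the image $c$ of $x$ in $B^{\otimes d}/N=K$ is nonzero, $n:=x-c$ is a nilpotent element of $B^{\odot d}$, and $x=c(1+c^{-1}n)$ is a unit in $B^{\odot d}$ with inverse $c^{-1}\sum_{k\ge 0}(-c^{-1}n)^k$. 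Thus every element of $B^{\odot d}$ is a unit or nilpotent, so $B^{\odot d}$ is local with maximal ideal $N\cap B^{\odot d}$, and its residue field, caught between $K$ and $B^{\otimes d}/N=K$, is $K$.

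Combining these: $u$ and $\varphi|_{B^{\odot d}}$ are two $K$-algebra homomorphisms from $B^{\odot d}$, a finite local $K$-algebra with residue field $K$, to $K$, hence equal. The only step that requires any thought is the last one, i.e. checking that the ring of $S_d$-invariants remains local with residue field $K$; but as indicated this is a soft argument, and in particular no manipulation of the basis $\{e_\Gamma\}$ or of the operator on $\wedge^d B$ is needed.
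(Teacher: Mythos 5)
Your proof is correct, and it takes a genuinely different route from the paper's. The paper argues by explicit computation: it chooses a basis $e_1,\dots,e_d$ of $B$ adapted to the filtration by powers of the maximal ideal $J$ with $e_d=1$, and checks directly that both $\varphi(e_\Gamma)$ and the action of $e_\Gamma$ on $e_1\wedge\dots\wedge e_d$ vanish except for $\Gamma=\{(d,\dots,d)\}$, where both give $1$. You instead run a rigidity argument: both maps are unital $K$-algebra homomorphisms $B^{\odot d}\to K$, and a finite local $K$-algebra with residue field $K$ admits only one such homomorphism (its kernel must be the maximal ideal), so it suffices to check that $B^{\odot d}$ is local with residue field $K$; your verification of this (nilpotence of $\mathfrak{m}\otimes_K B'+B\otimes_K\mathfrak{m}'$ by induction on $d$, then the unit-or-nilpotent dichotomy for invariants, with the inverse $c^{-1}\sum_k(-c^{-1}n)^k$ staying inside $B^{\odot d}$) is sound. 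What each approach buys: yours avoids all manipulation of the basis $\{e_\Gamma\}$ and of the operator on $\wedge^d B$, is more conceptual, and incidentally establishes that $B^{\odot d}$ is itself local with residue field $K$; but note that it leans essentially on the multiplicativity of $u$, i.e.\ on the assertion in the preamble that the action of $B^{\odot d}$ on $\wedge^d B$ makes $u$ a morphism of $K$-algebras --- a fact the paper states without detailed proof and which its own basis computation never needs (the paper's argument only uses $u$ and $\varphi$ as linear maps). So if you want your proof to be fully self-contained you should include the short check that the action is associative and unital (which is a one-line computation on pure-tensor decompositions, using commutativity of $B$), and likewise that $\varphi$ is an algebra map because the multiplication $B^{\otimes d}\to B$ is one for commutative $B$; with those remarks added, your argument is a complete and valid alternative to the paper's.
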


\begin{proof}
	Let $J$ denote the maximal ideal of $B$.
	First note that $J$ is a nilpotent ideal since $B$ is artinian local.
	Take a basis $e_1,\dots,e_d$ of $B$ over $K$ so that $e_d=1$ and $J^i$ is spanned by $e_1,\dots,e_{d_i}$ as a $K$-vector space, where $d_i=\dim_K J^i$.
	For $\Gamma\in I^d_{[1,d]}$ we have
	$$
		\varphi(e_\Gamma) =
		\begin{cases}
			1		&(\Gamma = \{(d,\dots,d)\})\\
			0		&(\text{otherwise})
		\end{cases}
	$$
	since $J$ is an ideal.
	For an element $b$ of $B$, we define its order $\ord(b)$ to be the maximum value of $i$ satisfying $b\in J^i$ (we set $\ord(0)=\infty$).
	Let $b_1,\dots,b_d\in B$ satisfy $\ord(b_i)\geq \ord(b_i)$ for all $i$ and $\ord(b_j) > \ord(b_j)$ for some $j$.
	Then the number of $b_i$'s contained in $J^k$ exceeds the dimension of $J^k$ for some $k$, so we get $b_1\wedge\dots\wedge b_d=0\in \wedge^d B$.
	Using this fact, the action of  $B^{\odot d}$ on $\wedge^dB$ can be computed as
	$$
		e_\Gamma\cdot (e_1\wedge \dots\wedge e_d) =
		\begin{cases}
			e_1\wedge\dots\wedge e_d	&(\Gamma = \{(d,\dots,d)\})\\
			0		&(\text{otherwise}).
		\end{cases}
	$$
	This completes the proof.
\end{proof}

\begin{lemma}\label{lem:p_well_defined}
	Let $A$ be a ring and $B_i\;(i=1,2)$ be finite free $A$-algebras of rank $d_i$.
	The morphism of $A$-algebras
	$$
		\pr_1^{\otimes d_1}\otimes \pr_2^{\otimes d_2}\colon (B_1\times B_2)^{\otimes (d_1+d_2)}\to B_1^{\otimes d_1}\otimes_A B_2^{\otimes d_2}
	$$
	restricts to a morphism
	$$
		p\colon (B_1\times B_2)^{\odot (d_1+d_2)}\to B_1^{\odot d_1}\otimes_A B_2^{\odot d_2}
	$$
	and fits into the following commutative diagram.
	$$
	\xymatrix{
		(B_1\times B_2)^{\odot (d_1+d_2)}\ar[r]^-u	\ar[d]^-p			&A\ar@{=}[d]\\
		B_1^{\odot d_1}\otimes_A B_2^{\odot d_2}\ar[r]^-{u\otimes u}			&A
	}
	$$
\end{lemma}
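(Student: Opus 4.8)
The plan is to make the morphism $\pr_1^{\otimes d_1}\otimes\pr_2^{\otimes d_2}$ completely explicit via the product decomposition of $(B_1\times B_2)^{\otimes n}$, deduce the existence of $p$ from a stabilizer argument, and then verify the square by evaluating on the standard basis of $(B_1\times B_2)^{\odot (d_1+d_2)}$.

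Write $n=d_1+d_2$. Using the product decomposition $(B_1\times B_2)^{\otimes n}\simeq\prod_{g\colon\{1,\dots,n\}\to\{1,2\}}B_{g(1)}\otimes_A\cdots\otimes_A B_{g(n)}$, the morphism $\pr_1^{\otimes d_1}\otimes\pr_2^{\otimes d_2}$ is identified with the projection onto the factor indexed by $g_0$, where $g_0$ is $1$ on $\{1,\dots,d_1\}$ and $2$ on $\{d_1+1,\dots,n\}$; that factor is exactly $B_1^{\otimes d_1}\otimes_A B_2^{\otimes d_2}$. The symmetric group $S_n$ acts by permuting tensor slots, hence permutes the factors of this decomposition, and the subgroup $S_{d_1}\times S_{d_2}$ fixes the factor $g_0$, acting on it by the product of the slot-permutation actions of $S_{d_1}$ and $S_{d_2}$. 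Consequently the projection onto that factor is $S_{d_1}\times S_{d_2}$-equivariant, so it carries $(B_1\times B_2)^{\odot n}=((B_1\times B_2)^{\otimes n})^{S_n}$ into $(B_1^{\otimes d_1}\otimes_A B_2^{\otimes d_2})^{S_{d_1}\times S_{d_2}}$. Since $B_1^{\otimes d_1}$, $B_2^{\otimes d_2}$, and (by the discussion preceding Lemma~\ref{lem:reduction}) $B_1^{\odot d_1}$, $B_2^{\odot d_2}$ are all free $A$-modules, forming invariants---being the kernel of a map to a product---commutes with the relevant tensor products, so this target equals $B_1^{\odot d_1}\otimes_A B_2^{\odot d_2}$. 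This establishes that $p$ is well-defined.

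For the square, fix bases $e_1,\dots,e_{d_1}$ of $B_1$ and $e'_1,\dots,e'_{d_2}$ of $B_2$, inducing a basis $f_1,\dots,f_n$ of $B_1\times B_2$ with $f_k=(e_k,0)$ for $k\le d_1$ and $f_{d_1+l}=(0,e'_l)$; recall $u$ is independent of the basis. Both sides of the square are $A$-linear, so it suffices to evaluate on the basis $\{e_\Gamma\}$ of $(B_1\times B_2)^{\odot n}$ indexed by $S_n$-orbits $\Gamma$. Since $f_jf_k=0$ unless $j$ and $k$ lie on the same side of the partition $\{1,\dots,d_1\}\sqcup\{d_1+1,\dots,n\}$, the only tuples in $\Gamma$ contributing either to $p(e_\Gamma)$ (by the description of the projection above) or to $e_\Gamma\cdot(f_1\wedge\cdots\wedge f_n)$ are the ``block-diagonal'' ones, with values in $\{1,\dots,d_1\}$ on the first $d_1$ coordinates and in $\{d_1+1,\dots,n\}$ on the last $d_2$. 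Such tuples occur in $\Gamma$ precisely when $\Gamma$ has exactly $d_1$ entries (with multiplicity) in $\{1,\dots,d_1\}$, and then they form a single product $\Gamma_1\times\Gamma_2$ for a unique pair $\Gamma_1\in I^{d_1}_{[1,d_1]}$, $\Gamma_2\in I^{d_2}_{[1,d_2]}$. When no such tuple occurs, $p(e_\Gamma)=0=u(e_\Gamma)$; otherwise $p(e_\Gamma)=e_{\Gamma_1}\otimes e_{\Gamma_2}$, and, using the isomorphism $\wedge^n(B_1\times B_2)\simeq\wedge^{d_1}B_1\otimes_A\wedge^{d_2}B_2$ induced by $B_1\times B_2\simeq B_1\oplus B_2$ (for rank reasons only the top exterior power of each summand survives), one computes that $e_\Gamma$ acts on $\wedge^n(B_1\times B_2)$ by the scalar $u(e_{\Gamma_1})u(e_{\Gamma_2})$, which is $(u\otimes u)(e_{\Gamma_1}\otimes e_{\Gamma_2})$. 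Extending by linearity yields the commutativity of the square.

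I expect the only real work to be the bookkeeping in the last step: matching the block-diagonal tuples of an $S_n$-orbit with pairs of orbits for $S_{d_1}$ and $S_{d_2}$, and tracking the Koszul signs in the identification $\wedge^n(B_1\oplus B_2)\simeq\bigoplus_{i+j=n}\wedge^iB_1\otimes\wedge^jB_2$. Everything else is formal manipulation with the product decomposition and with invariants of free modules.
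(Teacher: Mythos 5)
Your proof is correct and follows essentially the same route as the paper: evaluate everything on the basis $\{e_\Gamma\}$, observe that only the block-diagonal tuples (those with first $d_1$ entries in $\{1,\dots,d_1\}$ and last $d_2$ entries in the second block) survive, identify them with a pair $(\Gamma_1,\Gamma_2)$, and factor the wedge computation accordingly. The only deviation is your invariant-theoretic argument for the well-definedness of $p$ (equivariance under $S_{d_1}\times S_{d_2}$ plus commuting invariants with tensor products of free modules), which is valid but redundant, since the paper gets well-definedness for free from the same basis computation: $p(e_\Gamma)$ is either $e_{\Gamma_1}\otimes e_{\Gamma_2}$ or $0$, hence visibly lands in $B_1^{\odot d_1}\otimes_A B_2^{\odot d_2}$.
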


\begin{proof}
	Let $e_1,\dots,e_{d_1}$ (resp. $e_{d_1+1},\dots,e_{d_1+d_2}$) be a basis of $B_1$ (resp. $B_2$) over $A$.
	For $\Gamma_1\in I_{[1,d_1]}^{d_1}$ and $\Gamma_2\in I_{[d_1+1,d_1+d_2]}^{d_2}$ we define $\Gamma_1\ast\Gamma_2\in I_{[1,d_1+d_2]}^{d_1+d_2}$ by concatenation.
	Then for an orbit $\Gamma \in I_{[1,d_1+d_2]}^{d_1+d_2}$ we have
	\begin{align*}
				(\pr_1^{\otimes d_1}\otimes \pr_2^{\otimes d_2})(e_\Gamma) =
				\begin{cases}
					e_{\Gamma_1}\otimes e_{\Gamma_2}		&	(\Gamma = \Gamma_1\ast\Gamma_2 \text{ for some }\Gamma_1\in I_{[1,d_1]}^{d_1},\;\Gamma_2\in I_{[d_1+1,d_1+d_2]}^{d_2})\\
					0									&	(\text{otherwise}).
				\end{cases}
	\end{align*}
	This proves the first assertion.
	In the first case we have
	\begin{align*}
			e_\Gamma\cdot (e_1\wedge\dots\wedge e_{d_1+d_2})
		{}={}&\textstyle\sum_{(i_1,\dots,i_{d_1+d_2})\in \Gamma} e_{i_1}e_1\wedge\dots\wedge e_{i_{d_1+d_2}}e_{d_1+d_2}\\
		{}={}&\textstyle\sum_{(i_1,\dots,i_{d_1})\in \Gamma_1} \sum_{(i_{d_1+1},\dots,i_{d_1+d_2})\in \Gamma_2} e_{i_1}e_1\wedge\dots\wedge e_{i_{d_1+d_2}}e_{d_1+d_2}\\
		{}={}&(e_{\Gamma_1}\cdot (e_1\wedge\dots\wedge e_{d_1}))\wedge (e_{\Gamma_2}\cdot (e_{d_1+1}\wedge\dots\wedge e_{d_1+d_2}))
	\end{align*}
	In the second case the action of $e_\Gamma$ on $\wedge^{d_1+d_2}(B_1\times B_2)$ is trivial.
	These results imply the commutativity of the diagram.
\end{proof}

Let $S$ be a scheme.
For a finite morphism $f\colon Y\to X$ between $S$-schemes, the symmetric product $\Sym^d_X(Y)$ is defined to be the quotient of $Y\times_X\dots\times_XY$ ($d$ times) by the canonical action of the symmetric group $S_d$, i.e.
$$
	\Sym^d_X(Y)=\underline{\Spec}_X(f_*\mathcal{O}_Y)^{\odot d}
$$
(see \cite[Expos\'e V, 1]{SGA1} for basic facts about quotients of schemes by finite groups).

Suppose that $f$ is a finite locally free morphism of constant rank $d$.
The morphism of $\mathcal{O}_X$-algebras $u\colon (f_*\mathcal{O}_Y)^{\odot d}\to \mathcal{O}_X$ (see the preamble to Lemma \ref{lem:reduction}) gives rise to a morphism $f^\sharp\colon X\to \Sym^d_X(Y)$ (note that this morphism is considered also in \cite[Section 6]{SV96}).
Now let $G$ be a commutative group scheme over $S$.
For any $g\in G(Y)$, the morphism
$$
	Y\times_X\dots \times_X Y\xrightarrow{(g,\dots,g)} G\times_S\dots\times_S G\xrightarrow{+} G
$$
is $S_d$-invariant, so it descends to a morphism $\sigma(g)\colon \Sym_X^d(Y)\to G$.
We define $f_*g\in G(X)$ to be the composition
$$
	X\xrightarrow{f^\sharp}\Sym_X^d(Y)\xrightarrow{\sigma(g)}G.
$$

\begin{lemma}\label{lem:field_norm_trace}
	Let $S$ be a scheme, $K$ be a field over $S$ and $L/K$ be a finite extension.
	Let $f\colon \Spec L\to \Spec K$ be the corresponding morphism.
	Then we have
	\begin{align*}
		f_*{}={}&\Nm_{L/K}\colon \mathbb{G}_{m,S}(L)\to \mathbb{G}_{m,S}(K),\\
		f_*{}={}&\Tr_{L/K}\colon \mathbb{G}_{a,S}(L)\to \mathbb{G}_{a,S}(K).
	\end{align*}
\end{lemma}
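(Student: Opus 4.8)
The plan is to make the geometric definition of $f_*g$ completely explicit on coordinate rings and then reduce everything to a linear-algebra identity. Write $B=L$, a free $K$-algebra of rank $d=[L:K]$, so that $\Sym^d_K(\Spec L)=\Spec(B^{\odot d})$ and, by construction, $f^\sharp\colon\Spec K\to\Sym^d_K(\Spec L)$ is dual to $u\colon B^{\odot d}\to K$. First I would unwind $\sigma(g)$: since the group law of $\mathbb{G}_{m}$ (resp.\ $\mathbb{G}_{a}$) is dual to $t\mapsto t\otimes t$ (resp.\ $t\mapsto t\otimes 1+1\otimes t$), and the $i$-th factor $\Spec L$ of $(\Spec L)^{\times_K d}$ is mapped to the group scheme via $g\circ\pr_i$, the morphism $\sigma(g)$ is dual to the $K$-algebra homomorphism sending the coordinate to
$$\textstyle g^{\otimes d}:=g\otimes\cdots\otimes g\in B^{\otimes d}\qquad\text{resp.}\qquad s(g):=\sum_{j=1}^{d}1^{\otimes(j-1)}\otimes g\otimes 1^{\otimes(d-j)}\in B^{\otimes d},$$
both of which are visibly $S_d$-invariant and hence lie in $B^{\odot d}$. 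Composing with $f^\sharp$ then gives $f_*g=u(g^{\otimes d})\in\mathbb{G}_{m,S}(K)=K^\times$ in the multiplicative case and $f_*g=u(s(g))\in\mathbb{G}_{a,S}(K)=K$ in the additive case.

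It remains to compute $u(g^{\otimes d})$ and $u(s(g))$ directly from the definition of $u$, that is, from the action of $B^{\odot d}$ on $\wedge^d_K B$ and the canonical isomorphism $\End_K(\wedge^d_K B)\simeq K$. Fixing a $K$-basis $e_1,\dots,e_d$ of $B$, writing $g=\sum_k a_ke_k$, and inserting this into the defining formula for the action, multilinearity of the exterior product gives, for all $b_1,\dots,b_d\in B$,
$$\textstyle g^{\otimes d}\cdot(b_1\wedge\cdots\wedge b_d)=(gb_1)\wedge\cdots\wedge(gb_d),\qquad s(g)\cdot(b_1\wedge\cdots\wedge b_d)=\sum_{j=1}^{d}b_1\wedge\cdots\wedge(gb_j)\wedge\cdots\wedge b_d.$$
Taking $b_1,\dots,b_d$ to be a basis and letting $m_g$ denote multiplication by $g$ on $B$, the two right-hand sides are multiplication by $\det(m_g)$ and by $\operatorname{tr}(m_g)$ respectively, the first being immediate and the second the standard formula for the induced action on the top exterior power. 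Hence $u(g^{\otimes d})=\det(m_g)=\Nm_{L/K}(g)$ and $u(s(g))=\operatorname{tr}(m_g)=\Tr_{L/K}(g)$, which proves the lemma. (The base scheme $S$ enters only trivially, since $\mathbb{G}_{m,S}(L)=L^\times$ and $\mathbb{G}_{a,S}(L)=L$ regardless of the structure morphism $\Spec K\to S$.)

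I expect the only genuine subtlety to lie in the first paragraph: carefully tracking the $S_d$-descent defining $\sigma(g)$ and the identification of $f^\sharp$ with $u$ at the level of rings. In the second paragraph one minor point of care arises for $s(g)$: its monomial coefficients in the basis $\{e_{i_1}\otimes\cdots\otimes e_{i_d}\}$ must be collected over all $j$ before the resulting sum can be factored as a single wedge of sums, since the individual summands $1^{\otimes(j-1)}\otimes g\otimes 1^{\otimes(d-j)}$ are not themselves $S_d$-symmetric. Beyond that, the computation is routine.
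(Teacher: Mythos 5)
Your proof is correct and follows essentially the same route as the paper: identify $f_*g$ with $u$ applied to the $S_d$-invariant element of $L^{\otimes d}$ obtained from the group law, then compute the action on $\wedge^d_K L$ to recognize $\det(m_g)=\Nm_{L/K}(g)$, resp. $\operatorname{tr}(m_g)=\Tr_{L/K}(g)$. You merely make explicit the dual-coalgebra unwinding of $\sigma(g)$ and spell out the additive case, which the paper leaves as ``similar.''
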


\begin{proof}
	Let $e_1,\dots,e_d$ be a basis of $L$ over $K$.
	Let $a\in \mathbb{G}_{m,S}(L)=L^\times$.
	By definition, the action of $a^{\otimes d}\in L^{\odot d}$ on $\wedge^dL$ is equal to the scalar multiplication by $f_*a$.
	On the other hand, we have
	$$
		a^{\otimes d}\cdot (e_1\wedge \dots\wedge e_d) = ae_1\wedge\dots\wedge ae_d = \operatorname{det}_{K} (L\xrightarrow{a}L)\cdot(e_1\wedge \dots\wedge e_d),
	$$
	so we get $f_*a=\operatorname{det}_{K} (L\xrightarrow{a}L) = \Nm_{L/K}(a)$.
	The proof of the second equality is similar.
\end{proof}

\begin{lemma}\label{lem:base_change}
	Let $S$ be a scheme, $f\colon Y\to X$ be a finite locally free morphism of constant degree $d$ between $S$-schemes and $h\colon X'\to X$ be a morphism of $S$-schemes.
	Consider the following Cartesian diagram.
	$$
	\xymatrix{
		Y'\ar[r]^-{h'}\ar[d]^-{f'}		&Y\ar[d]^-{f}\\
		X'\ar[r]^-{h}					&X
	}
	$$
	For any commutative group scheme $G$ over $S$, we have
	$$
		h^*f_*=f'_*{h'}^*\colon G(Y)\to G(X').
	$$
\end{lemma}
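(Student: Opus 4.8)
The plan is to observe that each of the three ingredients in the definition of $f_*$ — the symmetric product $\Sym^d_X(Y)$, the section $f^\sharp$, and the morphism $\sigma(g)$ — commutes with the base change along $h$, and then to compose these compatibilities. Write $\mathcal{E}=f_*\mathcal{O}_Y$; since $f$ is finite locally free of degree $d$, $\mathcal{E}$ is a locally free $\mathcal{O}_X$-module of rank $d$, and $Y=\underline{\Spec}_X(\mathcal{E})$, hence $Y'=\underline{\Spec}_{X'}(h^*\mathcal{E})$ and $f'_*\mathcal{O}_{Y'}=h^*\mathcal{E}$. The first step is the canonical isomorphism $\Sym^d_X(Y)\times_XX'\xrightarrow{\ \sim\ }\Sym^d_{X'}(Y')$ over $X'$: working Zariski-locally on $X$ we may assume $\mathcal{E}$ free, and then $\mathcal{E}^{\odot d}$ is free on the basis $\{e_\Gamma\}_{\Gamma\in I_{[1,d]}^d}$ recalled in the preamble to Lemma \ref{lem:reduction}, a description unaffected by base change; hence the natural map $\mathcal{E}^{\odot d}\otimes_{\mathcal{O}_X}\mathcal{O}_{X'}\to(h^*\mathcal{E})^{\odot d}$ is an isomorphism, and applying $\underline{\Spec}_{X'}$ gives the identification, under which the first projection $\pi\colon\Sym^d_X(Y)\times_XX'\to\Sym^d_X(Y)$ corresponds to the morphism $\Sym^d_{X'}(Y')\to\Sym^d_X(Y)$ induced by $h'$.

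Granting this, $f^\sharp$ commutes with the base change, i.e. $\pi\circ(f')^\sharp=f^\sharp\circ h$ as morphisms $X'\to\Sym^d_X(Y)$. Indeed $u\colon\mathcal{E}^{\odot d}\to\mathcal{O}_X$ is assembled from the $\mathcal{E}^{\odot d}$-action on the line bundle $\wedge^d\mathcal{E}$ and the canonical isomorphism $\End_{\mathcal{O}_X}(\wedge^d\mathcal{E})\simeq\mathcal{O}_X$; since $\wedge^d\mathcal{E}$, that action, and $\End$ of an invertible module all commute with arbitrary base change — which one checks from the explicit formula $e_\Gamma\cdot(b_1\wedge\dots\wedge b_d)=\sum_{(i_1,\dots,i_d)\in\Gamma}e_{i_1}b_1\wedge\dots\wedge e_{i_d}b_d$ after trivializing $\mathcal{E}$ — the map $u$ pulls back to the corresponding map $u'\colon(h^*\mathcal{E})^{\odot d}\to\mathcal{O}_{X'}$, which is exactly the assertion about $f^\sharp$. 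Similarly $\sigma(g)$ commutes with base change: it is the descent along the quotient $Y\times_X\dots\times_XY\to\Sym^d_X(Y)$ of the $S_d$-invariant morphism $Y\times_X\dots\times_XY\xrightarrow{(g,\dots,g)}G\times_S\dots\times_SG\xrightarrow{+}G$, and forming this quotient, the map $(g,\dots,g)$, and the group law are all compatible with the base change, so $\sigma(g)\circ\pi=\sigma(g')$ with $g'={h'}^*g$. Composing the three compatibilities,
$$
	h^*f_*g=(\sigma(g)\circ f^\sharp)\circ h=\sigma(g)\circ\pi\circ(f')^\sharp=\sigma(g')\circ(f')^\sharp=f'_*g',
$$
which is the assertion.

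The one step that demands real care is the base change isomorphism for $\Sym^d$ in the first paragraph: one must verify that $\underline{\Spec}_X(\mathcal{E}^{\odot d})\times_XX'$ is still the quotient of $Y'\times_{X'}\dots\times_{X'}Y'$ by $S_d$, which is precisely where finite \emph{local freeness} of $f$ (rather than mere finiteness) is used, via the fact that the $S_d$-invariants of the $d$-th tensor power of a locally free module commute with arbitrary base change. In addition one must keep track that the canonical identifications involved — of $\mathcal{E}^{\odot d}$, of the line bundle $\wedge^d\mathcal{E}$, and of $\End_{\mathcal{O}_X}(\wedge^d\mathcal{E})$ with $\mathcal{O}_X$ — are the natural ones, so that the three base change compatibilities paste together correctly. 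Everything else is a direct unwinding of the definitions.
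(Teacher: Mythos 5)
Your proof is correct and is essentially the paper's own argument: the paper simply observes that, since $Y$ and $\Sym^d_X(Y)$ are finite locally free over $X$, all constructions entering the definition of $f_*$ (the symmetric product, $f^\sharp$, and $\sigma(g)$) are compatible with base change, which is exactly what you verify in detail. Your write-up just makes explicit the compatibilities the paper declares obvious.
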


\begin{proof}
	Since $Y$ and $\Sym^d_X(Y)$ are finite locally free over $X$, all the constructions we used to define $f_*$ are compatible with base-change, so the claim is obvious.
\end{proof}

\begin{lemma}\label{lem:reduction_scheme}
	Let $S$ be a scheme, $K$ a field over $S$ and $f\colon X\to \Spec K$ a finite morphism of degree $d$.
	Suppose that $X$ is connected and has a section $s\colon \Spec K\to X$.
	Then for any commutative group scheme $G$ over $S$, we have
	$$
		f_*=d\cdot s^*\colon G(X)\to G(K).
	$$
\end{lemma}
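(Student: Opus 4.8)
The plan is to reduce everything to Lemma \ref{lem:reduction} together with the definitions of $f^\sharp$ and $\sigma(g)$. First I would unwind the hypotheses: $X=\Spec B$ for a finite $K$-algebra $B$ with $\dim_K B=d$, and since $X$ is connected, $B$ is Artinian local; the section $s$ corresponds to a $K$-algebra homomorphism $\epsilon\colon B\to K$ whose kernel, being maximal, must be the maximal ideal of $B$. Hence $B$ has residue field $K$ and the residue map $B\twoheadrightarrow K$ coincides with $\epsilon$, so Lemma \ref{lem:reduction} applies and tells us that the restriction to $B^{\odot d}$ of $\varphi\colon B^{\otimes d}\xrightarrow{\mathrm{mult}} B\xrightarrow{\epsilon} K$ equals $u\colon B^{\odot d}\to K$. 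In particular $f$ is automatically finite locally free of rank $d$, so the construction of $f_*$ is available.

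Next I would translate this into an identity of morphisms. Write $q\colon X\times_K\dots\times_K X=\Spec(B^{\otimes d})\to \Sym^d_{\Spec K}(X)=\Spec(B^{\odot d})$ for the quotient map, corresponding to the inclusion $B^{\odot d}\hookrightarrow B^{\otimes d}$, and let $\underline s\colon \Spec K\to X\times_K\dots\times_K X$ be the morphism all of whose components equal $s$. On coordinate rings $\underline s$ is the map $b_1\otimes\dots\otimes b_d\mapsto \epsilon(b_1)\cdots\epsilon(b_d)=\epsilon(b_1\cdots b_d)$, i.e.\ exactly $\varphi$. Therefore $q\circ\underline s$ corresponds to $\varphi|_{B^{\odot d}}=u$, which is precisely the ring map defining $f^\sharp$, and so $f^\sharp=q\circ\underline s$.

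Finally, for $g\in G(X)$ I would compute, using that $\sigma(g)$ is by definition the descent along $q$ of ${+}\circ(g,\dots,g)$:
\[ f_*g=\sigma(g)\circ f^\sharp=\sigma(g)\circ q\circ\underline s=\bigl({+}\circ(g,\dots,g)\bigr)\circ\underline s={+}\circ(g\circ s,\dots,g\circ s)=d\cdot(g\circ s)=d\cdot s^*g, \]
where the penultimate steps use that $\pr_i\circ\underline s=s$ for each $i$, so $(g,\dots,g)\circ\underline s$ is the constant tuple with entries $g\circ s=s^*g$, and that feeding this tuple into the group law ${+}\colon G\times_S\dots\times_S G\to G$ gives $d\cdot s^*g$ in $G(K)$. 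The only real content is the identification $f^\sharp=q\circ\underline s$, which is exactly where Lemma \ref{lem:reduction} — and hence the connectedness hypothesis, needed to make $B$ local with residue field $K$ — is used; everything else is bookkeeping with the definitions, and no finiteness, separatedness, or excellence hypotheses on $G$ or $S$ enter.
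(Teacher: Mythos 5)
Your proof is correct and takes essentially the same route as the paper: both arguments use Lemma \ref{lem:reduction} to identify $f^\sharp$ with the composite $\Spec K\xrightarrow{s}X\xrightarrow{\Delta}X\times_K\dots\times_K X\to \Sym^d_K(X)$ (your $q\circ\underline{s}$ is exactly this composite) and then conclude from the defining property of $\sigma(g)$ that $f_*g=d\cdot s^*g$. Your extra bookkeeping verifying that connectedness plus the section makes $B$ local with residue field $K$ is a correct spelling-out of what the paper leaves implicit.
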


\begin{proof}
	Let $g\in G(X)$.
	Consider the following diagram of $S$-schemes.
	$$
	\xymatrix@C=40pt{
		\Spec K\ar[r]^-{s}\ar[drr]^-{f^\sharp}	&X\ar[r]^-{\Delta}		&X\times_K\dots\times_KX\ar[r]^-{g\times\dots\times g}\ar[d]		&G\times_S\dots\times_SG\ar[d]^-{+}\\
					&		&\Sym_K^d(X)	\ar[r]^-{\sigma(g)}				&G
	}
	$$
	The left triangle is commutative by Lemma \ref{lem:reduction} and the right square is commutative by the definition of $\sigma(g)$.
	Therefore the total trapezoid is commutative, which implies $f_*g = d\cdot s^*g$.
\end{proof}

\begin{lemma}\label{lem:coproduct}
	Let $S$ be a scheme and $f_i\colon Y_i\to X\;(i=1,2)$ be finite locally free morphisms of constant degree $d_i$ between $S$-schemes.
	Let $G$ be a commutative group scheme over $S$.
	For any $g_1\in G(Y_1)$ and $g_2\in G(Y_2)$, we have
	$$
		{f_1}_*g_1+{f_2}_*g_2 = (f_1,f_2)_*(g_1,g_2)
	$$
	in $G(X)$, where $(f_1,f_2)\colon Y_1\sqcup Y_2\to X$ and $(g_1,g_2)\colon Y_1\sqcup Y_2\to G$ are morphisms induced by the universal property of coproducts.
\end{lemma}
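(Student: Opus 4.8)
The plan is to reduce to an explicit computation with symmetric products of affine schemes, for which Lemma~\ref{lem:p_well_defined} does the essential work. Since $G$ is a Zariski sheaf and, by Lemma~\ref{lem:base_change}, both $(f_i)_*$ and $(f_1,f_2)_*$ commute with restriction to Zariski opens of $X$, the asserted identity may be checked Zariski-locally on $X$; so I may assume $X=\Spec A$ is affine and $(f_i)_*\mathcal{O}_{Y_i}$ is free, say $Y_i=\Spec B_i$ with $B_i$ a finite free $A$-algebra of rank $d_i$. Put $d=d_1+d_2$ and $Y=Y_1\sqcup Y_2$, so that $f_*\mathcal{O}_Y$ corresponds to $B:=B_1\times B_2$, finite free of rank $d$, and $(f_1,f_2)\colon Y\to X$ is finite locally free of constant rank $d$.

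First I would record the decomposition of the symmetric product. From $(B_1\times B_2)^{\otimes d}=\prod_{\phi\colon\{1,\dots,d\}\to\{1,2\}}\bigotimes_{i=1}^d B_{\phi(i)}$ and the fact that taking $S_d$-invariants commutes with this finite product, one obtains a canonical splitting into open-and-closed subschemes
$$\textstyle\Sym^d_X(Y)=\coprod_{a+b=d}\Sym^a_X(Y_1)\times_X\Sym^b_X(Y_2),$$
with the $(a,b)$-summand corresponding to the subalgebra $B_1^{\odot a}\otimes_A B_2^{\odot b}$ of $B^{\odot d}$. Let $\iota\colon\Sym^{d_1}_X(Y_1)\times_X\Sym^{d_2}_X(Y_2)\hookrightarrow\Sym^d_X(Y)$ be the immersion onto the $(d_1,d_2)$-summand; on the level of the presentations of the symmetric products as finite-group quotients it is induced by the open-and-closed immersion of $Y_1\times_X\dots\times_X Y_1\times_X Y_2\times_X\dots\times_X Y_2$ ($d_1$ factors $Y_1$, then $d_2$ factors $Y_2$) into $Y\times_X\dots\times_X Y$ ($d$ factors), which is equivariant for $S_{d_1}\times S_{d_2}\hookrightarrow S_d$.

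Next I would identify, through $\iota$, the two morphisms occurring in the definition of the transfer. The morphism $(f_1,f_2)^\sharp\colon X\to\Sym^d_X(Y)$ corresponds to $u\colon B^{\odot d}\to A$, and Lemma~\ref{lem:p_well_defined} asserts precisely that $u$ factors as $B^{\odot d}\xrightarrow{p}B_1^{\odot d_1}\otimes_A B_2^{\odot d_2}\xrightarrow{u\otimes u}A$ with $p$ the projection onto the $(d_1,d_2)$-component; passing to spectra, this says $(f_1,f_2)^\sharp=\iota\circ(f_1^\sharp,f_2^\sharp)$. For $\sigma(g_1,g_2)$: its defining morphism is $Y\times_X\dots\times_X Y\xrightarrow{((g_1,g_2),\dots,(g_1,g_2))}G\times_S\dots\times_S G\xrightarrow{+}G$; restricting to the clopen piece described above, on which $(g_1,g_2)$ specializes to $g_1$ on each $Y_1$-factor and to $g_2$ on each $Y_2$-factor, and regrouping the $d$-fold sum into its $g_1$-part and its $g_2$-part by commutativity and associativity of the group law, one obtains (after descending along the $S_{d_1}\times S_{d_2}$-quotient, which is an epimorphism)
$$\sigma(g_1,g_2)\circ\iota={+}\circ\bigl(\sigma(g_1)\times\sigma(g_2)\bigr)\colon\Sym^{d_1}_X(Y_1)\times_X\Sym^{d_2}_X(Y_2)\to G.$$

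It then remains only to compose. We have $(f_1,f_2)_*(g_1,g_2)=\sigma(g_1,g_2)\circ(f_1,f_2)^\sharp=\bigl(\sigma(g_1,g_2)\circ\iota\bigr)\circ(f_1^\sharp,f_2^\sharp)$, which by the last displayed identity equals ${+}\circ\bigl(\sigma(g_1)\circ f_1^\sharp,\ \sigma(g_2)\circ f_2^\sharp\bigr)={+}\circ(f_{1*}g_1,f_{2*}g_2)=f_{1*}g_1+f_{2*}g_2$, as desired. The arithmetic heart of the argument is entirely in Lemma~\ref{lem:p_well_defined}; the step that needs the most care is the bookkeeping around the symmetric-product decomposition — namely verifying that $\iota$ really is, compatibly, the immersion onto the subalgebra $B_1^{\odot d_1}\otimes_A B_2^{\odot d_2}$ and onto the image of the correct $S_{d_1}\times S_{d_2}$-equivariant clopen piece of the fiber power, so that restricting along $\iota$ a morphism which descended from the fiber power agrees with descending the restricted equivariant morphism. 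Once that compatibility is pinned down, all three steps above are formal.
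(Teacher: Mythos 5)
Your proof is correct and follows essentially the same route as the paper: both factor $(f_1,f_2)^\sharp$ through $\Sym^{d_1}_X(Y_1)\times_X\Sym^{d_2}_X(Y_2)$ via the morphism induced by $p$ from Lemma \ref{lem:p_well_defined} and then use the compatibility $\sigma(g_1,g_2)\circ\pi={+}\circ(\sigma(g_1)\times\sigma(g_2))$ coming from the definition of $\sigma$. Your extra bookkeeping (the clopen decomposition of $\Sym^{d}_X(Y_1\sqcup Y_2)$ and the explicit descent along the $S_{d_1}\times S_{d_2}$-quotient) just spells out details the paper leaves implicit.
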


\begin{proof}
	Consider the following diagram of $S$-schemes.
	$$
	\xymatrix@C=40pt{
		X\ar[r]^-{(f_1^\sharp,f_2^\sharp)}\ar@{=}[d]			&\Sym^{d_1}_X(Y_1)\times_X\Sym^{d_2}_X(Y_2)\ar[r]^-{\sigma(g_1)\times\sigma(g_2)}\ar[d]^-\pi		&G\times_S G\ar[d]^-{+}\\
		X\ar[r]^-{(f_1,f_2)^\sharp}							&\Sym^{d_1+d_2}_X(Y_1\sqcup Y_2)\ar[r]^-{\sigma(g_1,g_2)}						&G
	}
	$$
	Here, the morphism $\pi$ is induced by the morphism $p$ given in Lemma \ref{lem:p_well_defined}.
	The left square is commutative by Lemma \ref{lem:p_well_defined} and the right square is commutative by the definition of $\sigma$.
	Therefore the total rectangle is commutative, which implies the desired equality.
\end{proof}

\begin{corollary}\label{cor:alg_cld}
	Let $S$ be a scheme, $K$ an algebraically closed field over $S$ and $f\colon X\to \Spec K$ a finite morphism.
	Write $X=\{x_1,\dots,x_n\}$ and set $d_i = \dim_K \mathcal{O}_{X,x_i}$.
	Let $p_i\colon \{x_i\}\xrightarrow{\sim} \Spec K$ be the canonical projection and $r_i\colon \{x_i\}\to X$ be the canonical closed immersion.
	Then for any commutative group scheme $G$ over $S$, we have
	$$
		\textstyle f_*=\sum_{i=1}^n d_i\cdot {p_i}_*r_i^*\colon G(X)\to G(K).
	$$
\end{corollary}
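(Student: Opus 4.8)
The plan is to reduce the assertion to Lemma~\ref{lem:reduction_scheme}, after decomposing $X$ into its connected components and using the additivity recorded in Lemma~\ref{lem:coproduct}.

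First I would observe that, being finite over a field, $X$ is affine and $f$ is automatically finite locally free (so $f_*$ is defined); write $X=\Spec A$ with $\dim_K A<\infty$ and split $A$ as a finite product $A\cong\prod_{i=1}^n A_i$ of local artinian $K$-algebras, with $A_i\cong\mathcal{O}_{X,x_i}$ and $\dim_K A_i=d_i$. This exhibits $X$ as a disjoint union $X=\coprod_{i=1}^n\tilde X_i$ of connected schemes $\tilde X_i:=\Spec A_i$, and each $g_i:=f|_{\tilde X_i}\colon\tilde X_i\to\Spec K$ is finite locally free of constant degree $d_i$. Since a morphism $g\colon X\to G$ is the same thing as the tuple of its restrictions $(g|_{\tilde X_i})_i$, an induction on $n$ using Lemma~\ref{lem:coproduct} (via $\coprod_{i=1}^n\tilde X_i=(\coprod_{i=1}^{n-1}\tilde X_i)\sqcup\tilde X_n$) would give
\[
	f_*g=\sum_{i=1}^n (g_i)_*\bigl(g|_{\tilde X_i}\bigr)\qquad\text{for every }g\in G(X).
\]

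Next, fixing $i$, I would use that $K$ is algebraically closed: the residue field of the finite local $K$-algebra $A_i$ is then $K$, so the quotient map $A_i\twoheadrightarrow A_i/\mathfrak{m}_i=K$ yields a section $s_i\colon\Spec K\to\tilde X_i$ of $g_i$. As $\tilde X_i$ is connected, Lemma~\ref{lem:reduction_scheme} applies and gives $(g_i)_*=d_i\cdot s_i^*$. Writing $j_i\colon\tilde X_i\hookrightarrow X$ for the canonical open and closed immersion, the composite $j_i\circ s_i\colon\Spec K\to X$ is the $K$-point supported at $x_i$ that is the identity on residue fields, hence coincides with $r_i\circ p_i^{-1}$ once $\{x_i\}$ is identified with $\Spec K$ via $p_i$; therefore $s_i^*\bigl(g|_{\tilde X_i}\bigr)=(j_i s_i)^*g=(p_i^{-1})^*r_i^*g$. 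Finally, $p_i$ is finite locally free of degree $1$ with section $p_i^{-1}$, so Lemma~\ref{lem:reduction_scheme} with $d=1$ identifies $(p_i)_*$ with $(p_i^{-1})^*$. Substituting these into the displayed formula gives $f_*g=\sum_{i=1}^n d_i\cdot(p_i)_*r_i^*g$.

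The argument is essentially a matter of bookkeeping and I do not expect a genuine obstacle; the two points that need slight care are the passage from the two-summand form of Lemma~\ref{lem:coproduct} to an $n$-fold coproduct (a routine induction) and the identification of the $K$-point $j_i\circ s_i$ with $r_i\circ p_i^{-1}$, together with the remark that the transfer along an isomorphism is pullback along its inverse.
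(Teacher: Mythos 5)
Your argument is correct and is exactly the route the paper intends: its proof of Corollary~\ref{cor:alg_cld} simply cites Lemma~\ref{lem:reduction_scheme} and Lemma~\ref{lem:coproduct}, and you have spelled out the same decomposition into connected components, the induction on the coproduct, and the use of the section coming from $K$ being algebraically closed. The bookkeeping points you flag (the $n$-fold version of Lemma~\ref{lem:coproduct} and the identification $j_i\circ s_i=r_i\circ p_i^{-1}$, with $(p_i)_*=(p_i^{-1})^*$) are handled correctly.
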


\begin{proof}
	This follows immediately from Lemma \ref{lem:reduction_scheme} and Lemma \ref{lem:coproduct}.
\end{proof}

\section{The canonical transfer}

In this section we fix a noetherian base scheme $S$ and a separated commutative group scheme $G$ over $S$.
First we note the following fact.

\begin{lemma}\label{lem:dominant_injective}
	Let $f\colon X\to Y$ be a dominant morphism between $S$-schemes.
	If $Y$ is reduced, then $f^*\colon G(Y)\to G(X)$ is injective.
\end{lemma}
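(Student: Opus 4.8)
The plan is to exploit the separatedness of $G$ over $S$ together with the group structure. Given $g_1,g_2\in G(Y)$ with $f^*g_1=f^*g_2$, set $g:=g_1-g_2\in G(Y)$; since $f^*$ is a group homomorphism, $f^*g$ is the zero section of $G$ over $X$, and it suffices to prove $g=0$. Write $e\colon Y\to G$ for the composition of the structure morphism $Y\to S$ with the identity section of $G/S$, and consider the equalizer $Z:=\eq(g,e)$ of the two $S$-morphisms $g,e\colon Y\rightrightarrows G$.

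First I would observe that $Z$ is a closed subscheme of $Y$. Indeed, $Z$ is the fiber product of $(g,e)\colon Y\to G\times_S G$ with the diagonal $\Delta_{G/S}\colon G\to G\times_S G$, and since $G$ is separated over $S$ the diagonal is a closed immersion; hence so is $Z\hookrightarrow Y$.

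Next, the hypothesis $f^*g=f^*e$ says precisely that $g\circ f=e\circ f$, so by the universal property of the equalizer the morphism $f$ factors as $X\to Z\hookrightarrow Y$. Consequently the underlying topological space of $Z$ contains the image of $f$, which is dense in $Y$ because $f$ is dominant; since $Z$ is closed in $Y$, this forces $Z$ to have underlying space all of $Y$. A closed subscheme of a reduced scheme whose underlying space is the whole scheme is the whole scheme, as its ideal sheaf is then contained in the nilradical, which vanishes. Since $Y$ is reduced we conclude $Z=Y$, hence $g=e$, i.e. $g=0$ in $G(Y)$, and therefore $g_1=g_2$.

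The argument is essentially formal, and I do not anticipate a genuine obstacle: the only two points that use the hypotheses are the implications ``$G$ separated over $S$ $\Rightarrow$ the equalizer $Z$ is a closed subscheme of $Y$'' and ``$Y$ reduced $\Rightarrow$ a topologically dense closed subscheme of $Y$ equals $Y$'', and the rest is assembling these with the universal property of the equalizer. (Alternatively one can skip the reduction to $g$ and run the same argument directly with the equalizer of $g_1$ and $g_2$, so the group structure is not even strictly needed.)
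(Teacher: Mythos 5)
Your argument is correct and is essentially the paper's own proof: your equalizer $Z=\eq(g,e)$ is exactly the pullback of the closed diagonal $\Delta_{G/S}$ along $(g,e)$, and the paper likewise uses separatedness to get a set-theoretic factorization through $\Delta_G$ from dominance, then reducedness to upgrade it to a scheme-theoretic one. The initial reduction via $g=g_1-g_2$ is harmless but, as you note yourself, unnecessary.
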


\begin{proof}
	Suppose that $g,h\colon Y\to G$ are two morphisms over $S$ and $f^*g=f^*h$.
	Then $(g,h)\colon Y\to G\times_SG$ sends the generic points of $Y$ to points inside the diagonal $\Delta_G$.
	Since $G$ is separated over $S$, the diagonal $\Delta_G$ is closed in $G\times_SG$ and hence the image of $Y$ lies in $\Delta_G$ set-theoretically.
	Since $Y$ is reduced, this morphism factors through $\Delta_G$ scheme-theoretically, i.e. $g=h$.
\end{proof}

Let $X,Y$ be noetherian integral schemes over $S$ and $\alpha \in c_S(X,Y)$.
We construct the \emph{canonical transfer} $\alpha^*\colon G(Y)\to G(X)$ in the following two cases:
\begin{enumerate}
	\item	$\alpha$ is flat over $X$.
	\item	$X$ is normal.
\end{enumerate}

First suppose that $\alpha$ is flat over $X$.
Write $\alpha = \sum_{i=1}^nm_i[V_i]$.
Then each $V_i$ is finite locally free of constant rank over $X$.
Let $p_i\colon V_i\to X$ and $q_i\colon V_i\to Y$ be canonical projections.
We define
$$
	\textstyle \alpha^* = \sum_{i=1}^n m_i {p_i}_*q_i^*\colon G(Y)\to G(X).
$$
Note that for an $S$-morphism $f\colon X\to Y$, this definition of $f^*$ coincides with the usual pullback.

Next suppose that $X$ is normal.
Take a dense open subset $U\subset X$ such that $\alpha|_U$ is flat over $U$.
By the next lemma, for any $g\in G(Y)$ the element $(\alpha|_U)^*g\in G(U)$ lies in the image of $G(X)\to G(U)$.
We define $\alpha^*\colon G(Y)\to G(X)$ by setting $\alpha^*g:=(\alpha|_U)^*g$; this does not depend on the choice of $U$.

\begin{lemma}
	Let $X$ be a noetherian normal integral scheme, $U\subset X$ be a dense open subset and $f\colon V\to X$ a finite morphism.
	Set $V_U=V\times_XU$ and suppose that $f_U\colon V_U\to U$ is finite locally free of constant rank $d$.
	Then for any $g\in G(V)$, the element ${f_U}_*(g|_{V_U})\in G(U)$ lies in the image of $G(X)\to G(U)$.
\end{lemma}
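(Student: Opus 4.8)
The plan is to avoid extending the section $s:={f_U}_*(g|_{V_U})\in G(U)$ to $X$ directly --- which one cannot expect to do by a valuative criterion, since $G$ need not be proper --- and instead to extend the geometric datum $f^\sharp$ that produces it. Recall that $\Sym^d_X(V)=\underline{\Spec}_X\!\bigl((f_*\mathcal{O}_V)^{\odot d}\bigr)$ makes sense for any finite morphism $f$, and that $\Sym^d_X(V)\to X$ is itself \emph{finite}: the $\mathcal{O}_X$-algebra $(f_*\mathcal{O}_V)^{\odot d}$ is a submodule of the coherent $\mathcal{O}_X$-module $(f_*\mathcal{O}_V)^{\otimes d}$, hence coherent as $X$ is noetherian. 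Moreover the construction of $\sigma(g)\colon \Sym^d_X(V)\to G$ from Section 2 uses only that $f$ is finite and that $G$ is a commutative $S$-group scheme, so it is available here. Since the open immersion $U\hookrightarrow X$ is flat, taking $S_d$-invariants commutes with base change along it, so $\Sym^d_X(V)\times_X U\cong \Sym^d_U(V_U)$; under this identification $f_U^\sharp$ becomes an $X$-morphism $U\to\Sym^d_X(V)$ while $\sigma(g)$ restricts to $\sigma(g|_{V_U})$. Hence it suffices to extend $f_U^\sharp$ to an $X$-morphism $\phi\colon X\to\Sym^d_X(V)$: then $\bar s:=\sigma(g)\circ\phi\in G(X)$ satisfies $\bar s|_U=\sigma(g|_{V_U})\circ f_U^\sharp={f_U}_*(g|_{V_U})=s$, so $s$ lies in the image of $G(X)\to G(U)$.

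To construct $\phi$ I would work locally. Choose an affine open $\Spec R\subseteq X$, so $R$ is a normal domain with fraction field $K=K(X)$, and write $\Sym^d_X(V)|_{\Spec R}=\Spec A$ with $A$ a finite (hence integral) $R$-algebra. The restriction of $f_U^\sharp$ is an $R$-algebra homomorphism $\rho\colon A\to \mathcal{O}_X(U\cap\Spec R)$, and $\mathcal{O}_X(U\cap\Spec R)$ is contained in $K$ because $X$ is integral. Every $a\in A$ satisfies a monic polynomial over $R$; applying $\rho$ shows that $\rho(a)\in K$ is integral over $R$, hence $\rho(a)\in R$ since $R$ is integrally closed in $K$. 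Thus $\rho$ factors through $R$, yielding an $R$-section $\Spec R\to\Sym^d_X(V)|_{\Spec R}$ extending $f_U^\sharp$ over $\Spec R$. As $\Sym^d_X(V)\to X$ is affine, hence separated, and $X$ is reduced with $U$ dense, any two $X$-morphisms to $\Sym^d_X(V)$ that agree over $U$ coincide; therefore these local sections glue to the desired $\phi$, which restricts to $f_U^\sharp$ over $U$ by construction.

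\textbf{Main obstacle.}
The key point is the reduction in the first paragraph: instead of extending the section $s$ of the possibly non-proper group scheme $G$, one extends the rational section $f^\sharp$ of the \emph{finite} $X$-scheme $\Sym^d_X(V)$, and only afterwards pushes it into $G$ via $\sigma(g)$. Once this is set up, the extension is the standard fact that a rational section of an affine $X$-scheme integral over a normal base $X$ is everywhere defined, so I do not expect any genuine difficulty there. The one thing that needs to be checked with a little care is that the Section 2 constructions of $\Sym^d$ and $\sigma(g)$ go through for $f$ merely finite (not finite locally free) and are compatible with the flat base change $U\hookrightarrow X$ --- which they are, essentially verbatim; note that $f_U^\sharp$ itself is legitimate precisely because $f_U$ \emph{is} finite locally free of rank $d$ by hypothesis.
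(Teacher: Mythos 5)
Your proposal is correct and follows essentially the same route as the paper: identify $\Sym^d_U(V_U)$ with $\Sym^d_X(V)\times_XU$, use that $\Sym^d_X(V)$ is finite over the noetherian normal $X$ to extend $f_U^\sharp$ to a morphism $X\to\Sym^d_X(V)$, and compose with $\sigma(g)$. The only difference is that you spell out the integral-closure argument behind the extension step, which the paper leaves implicit.
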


\begin{proof}
	Recall that ${f_U}_*(g|_{V_U})$ is defined to be the composition
	$$
		U\xrightarrow{f_U^\sharp}	\Sym^d_U(V_U)\xrightarrow{\sigma(g|_{V_U})}	G.
	$$
	By construction, we have $\Sym^d_U(V_U) \simeq \Sym^d_X(V)\times_XU$.
	Since $X$ is noetherian, $\Sym^d_X(V)$ is finite over $X$.
	Since $X$ is normal, $f^\sharp_U$ uniquely extends to a morphism $f^\sharp\colon X\to \Sym_X^d(V)$.
	Then ${f_U}_*(g|_{V_U})$ is the image of
	$$
		X\xrightarrow{f^\sharp} \Sym^d_X(V)\xrightarrow{\sigma(g)} G
	$$
	under $G(X)\to G(U)$.
\end{proof}

\begin{example}
	Let $X,Y$ be noetherian integral schemes over $S$ and $\alpha\in c_S(X,Y)$.
	Suppose that $X$ is normal.
	Write $\alpha = \sum_{i=1}^n m_i[V_i]$ and let $q_i\colon V_i\to Y$ be the canonical projection.
	Then the canonical transfer $\alpha^*\colon \mathbb{G}_{m,S}(Y)\to \mathbb{G}_{m,S}(X)$ defined above is given by
	$$
		\textstyle \alpha^*g = \prod_{i=1}^n (\Nm_{k(V_i)/k(X)} (q_i^*g))^{m_i}.
	$$
	This follows from Lemma \ref{lem:field_norm_trace} and Lemma \ref{lem:base_change}.
	Similarly, the canonical transfer $\alpha^*\colon \mathbb{G}_{a,S}(Y)\to \mathbb{G}_{a,S}(X)$ is given by
	$$
		\textstyle \alpha^*g = \sum_{i=1}^n m_i \Tr_{k(V_i)/k(X)} (q_i^*g).
	$$
\end{example}

In the following, we will prove that the canonical transfer defined above is functorial, i.e. $\alpha^*\beta^*=(\beta\circ\alpha)^*$.
We start with a special case.

\begin{lemma}\label{flat_arbitrary}
	Let $X,Y$ be noetherian integral schemes over $S$ and $\alpha \in c_S(X,Y)$.
	Suppose that $\alpha$ is flat over $X$ and $Y$ is separated and of finite type over $S$.
	Let $K$ be an algebraically closed field and $\rho\in c_S(\Spec K,X)$.
	Then we have
	$$
		\rho^*\alpha^* = (\alpha\circ \rho)^*\colon G(Y)\to G(K).
	$$
\end{lemma}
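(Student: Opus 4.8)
The plan is to reduce both $\rho^*\alpha^*$ and $(\alpha\circ\rho)^*$ to one and the same explicit $\mathbb{Z}$-linear combination of pullbacks along $K$-points of $Y$, and then compare. Two preliminary observations about the algebraically closed field $K$ will be used throughout. First, every element of $c_S(\Spec K, T)$ for an $S$-scheme $T$ is a $\mathbb{Z}$-linear combination $\sum_k c_k\,\tau_k$ of graphs of $S$-morphisms $\tau_k\colon\Spec K\to T$, since each component is an integral closed subscheme of $\Spec K\times_S T$ finite over $\Spec K$, hence isomorphic to $\Spec K$ with projection to $\Spec K$ an isomorphism. Second, unwinding the definition of the canonical transfer in the flat case, $\bigl(\sum_k c_k\,\tau_k\bigr)^*=\sum_k c_k\,\tau_k^*\colon G(T)\to G(K)$, because the locally free transfer along an isomorphism is pullback along its inverse.

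First I would compute the cycle $\alpha\circ\rho$. Write $\rho=\sum_j n_j\,\sigma_j$ with $\sigma_j\colon\Spec K\to X$, and $\alpha=\sum_i m_i[V_i]$ with $p_i\colon V_i\to X$ finite locally free of rank $d_i$ and $q_i\colon V_i\to Y$. Since $\alpha$ is finite and flat over $X$ and $Y$ is separated of finite type over $S$, bilinearity of composition together with Lemma~\ref{lem:composition_formula}(2) and Lemma~\ref{lem:pullback_lemma} gives
$$
	\alpha\circ\rho=\textstyle\sum_{i,j}n_jm_i\,[V_{i,j}],\qquad V_{i,j}:=V_i\times_{X,\sigma_j}\Spec K,
$$
a cycle on $\Spec K\times_S Y$, where $V_{i,j}$ is finite locally free of rank $d_i$ over $\Spec K$ and is embedded in $\Spec K\times_S Y$ by its structure map $p_{i,j}\colon V_{i,j}\to\Spec K$ together with $q_{i,j}\colon V_{i,j}\to V_i\xrightarrow{q_i}Y$. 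Since $K$ is algebraically closed, every point $x$ of $V_{i,j}$ has residue field $K$, so $\length(\mathcal{O}_{V_{i,j},x})=\dim_K\mathcal{O}_{V_{i,j},x}$; decomposing $[V_{i,j}]=\sum_x\length(\mathcal{O}_{V_{i,j},x})[\{x\}]$ and invoking the two observations above, we obtain
$$
	(\alpha\circ\rho)^*g=\textstyle\sum_{i,j}n_jm_i\sum_x\length(\mathcal{O}_{V_{i,j},x})\,\tau_x^*g,
$$
where $\tau_x$ is the composite $\Spec K\xrightarrow{\,\sim\,}\{x\}\hookrightarrow V_{i,j}\xrightarrow{q_{i,j}}Y$.

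For the left-hand side, the definition of the canonical transfer in the flat case together with bilinearity gives $\rho^*\alpha^*g=\sum_{i,j}n_jm_i\,\sigma_j^*\bigl({p_i}_*q_i^*g\bigr)$. Base-changing $p_i$ along $\sigma_j$ produces $V_{i,j}$, and Lemma~\ref{lem:base_change} turns $\sigma_j^*\,{p_i}_*$ into ${p_{i,j}}_*$ composed with pullback along $V_{i,j}\to V_i$; since $q_{i,j}$ factors as $V_{i,j}\to V_i\xrightarrow{q_i}Y$, this yields $\rho^*\alpha^*g=\sum_{i,j}n_jm_i\,{p_{i,j}}_*q_{i,j}^*g$. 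Finally, Corollary~\ref{cor:alg_cld} applied to the finite morphism $p_{i,j}\colon V_{i,j}\to\Spec K$ over the algebraically closed field $K$ gives ${p_{i,j}}_*q_{i,j}^*g=\sum_x\dim_K(\mathcal{O}_{V_{i,j},x})\,\tau_x^*g$ with exactly the same maps $\tau_x$ and multiplicities as in the previous paragraph. Comparing the two expressions establishes the lemma.

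I expect the main obstacle to be purely organizational: making sure the $K$-points $\tau_x$ and their multiplicities coming out of the cycle-theoretic decomposition of $\alpha\circ\rho$ coincide with those coming out of Corollary~\ref{cor:alg_cld} applied to $p_{i,j}$. In both cases they are the maps $\Spec K\xrightarrow{\,\sim\,}\{x\}\hookrightarrow V_{i,j}\xrightarrow{q_{i,j}}Y$ weighted by $\length(\mathcal{O}_{V_{i,j},x})$, so the point is just to keep the fiber-product identifications of the $V_{i,j}$ consistent throughout; the genuine content is entirely in the already-established base-change compatibility of locally free transfers (Lemma~\ref{lem:base_change}) and the explicit form of transfers over an algebraically closed field (Corollary~\ref{cor:alg_cld}).
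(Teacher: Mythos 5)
Your proof is correct and follows essentially the same route as the paper's: reduce $\rho$ to ($\mathbb{Z}$-combinations of) morphisms $\Spec K\to X$, compute $\rho^*\alpha^*$ via the base-change compatibility of Lemma \ref{lem:base_change}, compute $\alpha\circ\rho$ via Lemma \ref{lem:pullback_lemma}, and match the two using Corollary \ref{cor:alg_cld}. The only difference is presentational: you expand everything into explicit sums over $K$-points, whereas the paper reduces to a single identity ${p'_i}_*=\sum_j m_{ij}{p_{ij}}_*r_{ij}^*$.
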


\begin{proof}
	Firstly, $\rho\in c_S(\Spec K,X)$ is a formal $\mathbb{Z}$-linear combination of $K$-rational points on $\Spec K\times_SX$.
	By linearity we may assume that $\rho$ is a \emph{morphism} from $\Spec K$ to $X$ over $S$.
	Write $\alpha = \sum_{i=1}^nm_i[V_i]$.
	Then each $V_i$ is finite and flat over $X$ and hence finite locally free of constant rank over $X$.
	Consider the following diagram, where the left square is Cartesian.
	$$
	\xymatrix{
		V_i\times_X\Spec K\ar[r]^-{\rho'_i}\ar[d]^-{p'_i}		&V_i\ar[d]^-{p_i}\ar[r]^-{q_i}		&Y\\
		\Spec K\ar[r]^-{\rho}							&X
	}
	$$
	By Lemma \ref{lem:base_change}, we have
	$$
		\textstyle \rho^*\alpha^* = \sum_{i=1}^n m_i \rho^*{p_i}_*q_i^* = \sum_{i=1}^n m_i {p'_i}_*{\rho'_i}^*q_i^*.
	$$
	On the other hand, since $\alpha$ is flat over $X$, we have
	\begin{align*}
		\textstyle\alpha\circ\rho	= \alpha\otimes_X\Spec K =	\textstyle\sum_{i=1}^n m_i[V_i\times_X\Spec K]	
	\end{align*}
	by Lemma \ref{lem:pullback_lemma}.
	Write $[V_i\times_X\Spec K] = \sum_{j=1}^{n_i} m_{ij}[\xi_{ij}]$.
	Let $p_{ij}\colon \{\xi_{ij}\}\to \Spec K$ be the canonical projection and $r_{ij}\colon \{\xi_{ij}\}\to V_i\times_X\Spec K$ be the canonical closed immersion.
	Then we have
	$$
		\textstyle (\alpha\circ \rho)^* = \sum_{i=1}^n m_i \sum_{j=1}^{n_i} m_{ij} {p_{ij}}_*r_{ij}^*{\rho_i'}^*q_i^*.
	$$
	Therefore it suffices to show that
	$$
		\textstyle {p'_i}_* = \sum_{j=1}^{n_i} m_{ij}{p_{ij}}_*r_{ij}^*
	$$
	holds for each $i$.
	This follows from Corollary \ref{cor:alg_cld}.
\end{proof}

\begin{lemma}\label{normal_generic}
	Let $X,Y$ be noetherian integral schemes over $S$ and $\alpha \in c_S(X,Y)$.
	Suppose that $X$ is normal and $Y$ is separated and of finite type over $S$.
	Let $K$ be an algebraically closed field and $\rho\colon \Spec K\to X$ be a morphism onto the generic point of $X$.
	Then we have
	$$
		\rho^*\alpha^* = (\alpha\circ \rho)^*\colon G(Y)\to G(K).
	$$
\end{lemma}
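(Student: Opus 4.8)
The plan is to reduce everything to Lemma~\ref{flat_arbitrary}, exploiting that $\rho$, being supported at the generic point of $X$, factors through any dense open subset of $X$.

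First I would take the dense open $U\subseteq X$ over which $\alpha|_U$ is flat (the one used to define the canonical transfer $\alpha^*$ in the normal case), and observe that since $\rho$ maps $\Spec K$ onto the generic point of $X$ — which lies in $U$ — the morphism $\rho$ factors uniquely as $\Spec K\xrightarrow{\rho_U}U\xrightarrow{j}X$, where $j$ is the open immersion. By the very definition of $\alpha^*$ in the normal case, $\alpha^*g$ restricts on $U$ to $(\alpha|_U)^*g$, so for every $g\in G(Y)$ we get
$$
	\rho^*\alpha^*g = \rho_U^*\,j^*(\alpha^*g) = \rho_U^*\big((\alpha|_U)^*g\big).
$$
Applying Lemma~\ref{flat_arbitrary} to $(U,Y,\alpha|_U,K,\rho_U)$ in place of $(X,Y,\alpha,K,\rho)$ — legitimate since $\alpha|_U$ is flat over $U$ and $Y$ is separated of finite type over $S$ — gives $\rho_U^*(\alpha|_U)^* = (\alpha|_U\circ\rho_U)^*$, hence $\rho^*\alpha^* = (\alpha|_U\circ\rho_U)^*$ as maps $G(Y)\to G(K)$.

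It then remains to show $(\alpha|_U\circ\rho_U)^* = (\alpha\circ\rho)^*$, for which I would prove the cycle-level identity $\alpha|_U\circ\rho_U = \alpha\circ\rho$ on $\Spec K\times_SY$. By Lemma~\ref{lem:composition_formula}(2) both composites are cycle pullbacks, $\alpha\circ\rho = \alpha\otimes_X[\Spec K]$ and $\alpha|_U\circ\rho_U = \alpha|_U\otimes_U[\Spec K]$; since $j$ is an open immersion we have $\alpha\otimes_X[U] = \alpha|_U$, and then transitivity of the pullback of $\mathbb{Z}$-universal cycles along $\Spec K\xrightarrow{\rho_U}U\xrightarrow{j}X$ (see \cite[8.1.35]{CD}) yields $\alpha|_U\otimes_U[\Spec K] = (\alpha\otimes_X[U])\otimes_U[\Spec K] = \alpha\otimes_X[\Spec K]$. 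Finally, because $\Spec K$ is the spectrum of a field, every relative $0$-cycle over it is flat, so the canonical transfer of a correspondence from $\Spec K$ is computed by the flat-case formula and therefore depends only on the underlying cycle; thus the cycle identity forces the desired equality of transfers.

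The one place to be careful — the \emph{main obstacle} such as it is — is the cycle-level identity $\alpha|_U\circ\rho_U = \alpha\circ\rho$: one must correctly invoke the compatibility of the pullback of relative cycles with composition of base morphisms and with restriction to an open subscheme (flat base change). Everything else is formal: the factorization of $\rho$ through $U$ is automatic, and feeding the data into Lemma~\ref{flat_arbitrary} is a direct substitution.
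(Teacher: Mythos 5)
Your proposal is correct and follows essentially the same route as the paper: factor $\rho$ through the dense open $U$ where $\alpha|_U$ is flat, use the definition of $\alpha^*$ in the normal case to identify $j^*\alpha^*$ with $(\alpha|_U)^*$, and apply Lemma~\ref{flat_arbitrary} over $U$. The only difference is that you spell out the cycle identity $\alpha|_U\circ\rho_U=\alpha\circ\rho$ (and why transfers from $\Spec K$ depend only on the cycle), which the paper's proof dismisses as the ``clearly commutative'' top triangle of its diagram — a harmless, indeed slightly more careful, elaboration.
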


\begin{proof}
	Let $U\subset X$ be a dense open subset such that $\alpha|_U$ is flat over $U$.
	Let $j\colon U\to X$ be the canonical open immersion and $\rho'\colon \Spec K\to U$ be the restriction of $\rho$.
	Consider the following diagram.
	$$
		\xymatrix@C=40pt{
			G(Y)\ar@{=}[r]\ar[d]^-{\alpha^*}\ar@/^15pt/[rr]^-{(\alpha\circ\rho)^*}	&G(Y)\ar[d]_-{(\alpha|_U)^*}\ar[r]	_-{(\alpha|_U\circ \rho')^*}					&G(K)\ar@{=}[d]\\
			G(X)\ar[r]^-{j^*}\ar@/_15pt/[rr]_-{\rho^*}					&G(U)\ar[r]^-{{\rho'}^*}								&G(K)
		}
	$$
	Let us verify that all faces are commutative.
	Two triangles on the top and the bottom are clearly commutative.
	The commutativity of the left square follows from the definition of $\alpha^*\colon G(Y)\to G(X)$.
	The commutativity of the right square follows from Lemma \ref{flat_arbitrary}.
	Therefore we get $\rho^*\alpha^* = (\alpha\circ \rho)^*$.
\end{proof}

\begin{lemma}\label{normal_dominant}
	Let $X,Y,X'$ be noetherian integral schemes over $S$ and $\alpha \in c_S(X,Y)$.
	Suppose that $X$ is normal and $Y$ is separated and of finite type over $S$.
	Let $f\colon X'\to X$ be a dominant morphism such that $\alpha\circ f$ is flat over $X'$.
	Then we have
	$$
		f^*\alpha^* = (\alpha\circ f)^*\colon G(Y)\to G(X').
	$$
\end{lemma}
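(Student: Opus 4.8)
The strategy is to test the claimed identity of maps $G(Y)\to G(X')$ at the generic point of $X'$. Since $X'$ is integral, hence reduced, the canonical morphism $\rho'\colon\Spec K\to X'$ onto the generic point of $X'$, where $K$ denotes an algebraic closure of the function field $k(X')$, induces an injection $\rho'^*\colon G(X')\to G(K)$ by Lemma \ref{lem:dominant_injective}. So it suffices to prove $\rho'^*f^*\alpha^* = \rho'^*(\alpha\circ f)^*$ as maps $G(Y)\to G(K)$.

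For the right-hand side: by hypothesis $\alpha\circ f$ is flat over $X'$, so Lemma \ref{flat_arbitrary}, applied with $X'$, $\alpha\circ f$ and $\rho'\in c_S(\Spec K,X')$ in place of $X$, $\alpha$ and $\rho$, gives $\rho'^*(\alpha\circ f)^* = ((\alpha\circ f)\circ\rho')^*$. For the left-hand side: $f$ and $\rho'$ are morphisms of schemes, so $f^*$ and $\rho'^*$ are the usual pullbacks and $\rho'^*f^* = (f\rho')^*$, the usual pullback along the composite morphism $f\rho'\colon\Spec K\to X$; since $f$ is dominant and $X$, $X'$ are irreducible, $f\rho'$ is onto the generic point of $X$, so Lemma \ref{normal_generic} applies and yields $(f\rho')^*\alpha^* = (\alpha\circ(f\rho'))^*$. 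It therefore remains to check that the two finite correspondences $(\alpha\circ f)\circ\rho'$ and $\alpha\circ(f\rho')$ in $c_S(\Spec K,Y)$ coincide.

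Here one cannot invoke the associativity of composition (Lemma \ref{lem:composition_formula}(1)) directly, because $X$ is not assumed of finite type over $S$; instead I would pass to the level of cycles. Applying Lemma \ref{lem:composition_formula}(2) to the morphisms $f\colon X'\to X$, $\rho'\colon\Spec K\to X'$ and $f\rho'\colon\Spec K\to X$ (the target being $Y$, which is separated and of finite type over $S$, in each case) gives $\alpha\circ f = \alpha\otimes_X[X']$, $(\alpha\circ f)\circ\rho' = (\alpha\circ f)\otimes_{X'}[\Spec K]$ and $\alpha\circ(f\rho') = \alpha\otimes_X[\Spec K]$, so the required identity reduces to $(\alpha\otimes_X[X'])\otimes_{X'}[\Spec K] = \alpha\otimes_X[\Spec K]$, which is the transitivity of the pullback of $\mathbb{Z}$-universal cycles (a standard property of the formalism of \cite{CD}, see \cite[8.1.35]{CD}). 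Combining these identities yields $\rho'^*f^*\alpha^* = \rho'^*(\alpha\circ f)^*$, and the injectivity of $\rho'^*$ concludes the proof. The only delicate point is this last step: keeping track of which compositions are legitimate under the finite-type hypotheses that actually hold, and carrying out the comparison through the cycle-level pullback rather than through composition of correspondences.
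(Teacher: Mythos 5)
Your proof is correct and follows essentially the same route as the paper: test the identity after pulling back along $\rho'\colon\Spec K\to X'$ with $K$ an algebraic closure of $k(X')$, apply Lemma \ref{flat_arbitrary} to $\alpha\circ f$ (flat over $X'$) and Lemma \ref{normal_generic} to $f\rho'$ (which hits the generic point of $X$ since $f$ is dominant), then conclude by the injectivity of $\rho'^*$ from Lemma \ref{lem:dominant_injective}. The only difference is that the paper writes $\alpha\circ f\circ\rho$ without comment, implicitly using $(\alpha\circ f)\circ\rho=\alpha\circ(f\circ\rho)$, whereas you justify this at the cycle level via Lemma \ref{lem:composition_formula}(2) and transitivity of the pullback of relative cycles --- a legitimate extra precaution, since the associativity statement of Lemma \ref{lem:composition_formula}(1) does not literally apply when $X$ is not of finite type over $S$.
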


\begin{proof}
	Let $K$ be an algebraic closure of the function field of $X'$ and $\rho\colon \Spec K\to X'$ be the canonical morphism.
	Then we have
	\begin{align*}
		\rho^*(\alpha\circ f)^*	&{}={} (\alpha\circ f\circ \rho)^*		&\text{(by Lemma \ref{flat_arbitrary})}\\
								&{}={} (f\circ \rho)^*\alpha^*			&\text{(by Lemma \ref{normal_generic})}\\
								&{}={} \rho^*f^*\alpha^*.
	\end{align*}
	Since $\rho^*\colon G(X')\to G(K)$ is injective, this completes the proof.
\end{proof}

\begin{lemma}\label{normal_arbitrary}
	Let $X,Y$ be noetherian integral schemes over $S$ and $\alpha \in c_S(X,Y)$.
	Suppose that $X$ is normal and $Y$ is separated and of finite type over $S$.
	Let $K$ be an algebraically closed field and $\rho\in c_S(\Spec K, X)$.
	Then we have
	$$
		\rho^*\alpha^* = (\alpha\circ \rho)^*\colon G(Y)\to G(K).
	$$
\end{lemma}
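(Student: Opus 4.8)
The plan is to deduce the general statement from the two cases already handled: the flat case (Lemma \ref{flat_arbitrary}) and the reduction along dominant morphisms (Lemma \ref{normal_dominant}); the link between them is a flattening blow-up of $X$, through which $\rho$ will be lifted.

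First I will reduce to the case where $\rho$ is a morphism. Since $K$ is algebraically closed, an integral closed subscheme of $\Spec K\times_S X$ that is finite over $\Spec K$ is the spectrum of a finite domain over $K$, hence is $\Spec K$, and so is the graph of a unique $S$-morphism $\Spec K\to X$. Thus $\rho$ is a $\mathbb{Z}$-linear combination of graphs of $S$-morphisms $\Spec K\to X$, and as both sides of the asserted identity are additive in $\rho$ (the flat transfer $\rho\mapsto\rho^*$ being additive and $\circ$ bilinear), I may assume $\rho$ is an $S$-morphism $\Spec K\to X$. Let $x\in X$ be its image, so that $\rho$ corresponds to a field embedding $k(x)\hookrightarrow K$.

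Next I will flatten $\alpha$ and lift $\rho$. As $X$ is normal integral, hence reduced, Lemma \ref{flattening} (applied with base $X$) gives a dominant blow-up $p\colon X'\to X$, with $X'$ integral since a blow-up of an integral scheme is integral, such that $\alpha\circ p=\alpha\otimes_X[X']$ is flat over $X'$. Because $p$ is proper and dominant it is surjective, so $p^{-1}(x)$ is a nonempty scheme of finite type over $k(x)$; pick a closed point $x'\in p^{-1}(x)$, so that $k(x')/k(x)$ is finite. Since $K$ is algebraically closed, the embedding $k(x)\hookrightarrow K$ extends to $k(x')\hookrightarrow K$, which defines an $S$-morphism $\rho'\colon\Spec K\to X'$ with $p\circ\rho'=\rho$.

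It then remains to compute
\begin{align*}
	\rho^*\alpha^*={\rho'}^*p^*\alpha^*={\rho'}^*(\alpha\circ p)^*=\bigl((\alpha\circ p)\circ\rho'\bigr)^*=(\alpha\circ\rho)^*,
\end{align*}
where the first equality is contravariant functoriality of the presheaf $G(-)$ along $\rho=p\circ\rho'$, the second is Lemma \ref{normal_dominant} ($p$ dominant, $X$ normal, $\alpha\circ p$ flat over $X'$), the third is Lemma \ref{flat_arbitrary} applied to $\alpha\circ p\in c_S(X',Y)$ and $\rho'$, and the last is associativity of the composition of finite correspondences (Lemma \ref{lem:composition_formula}(1); its separatedness hypotheses are only needed for $Y$, cf.\ the remark after it, or one argues directly with the pullback ${-}\otimes_{-}{-}$ of relative cycles) together with $p\circ\rho'=\rho$. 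I expect the main obstacle to be precisely this lifting of $\rho$: the point $x$ may lie in the locus where $\alpha$ fails to be flat, so no lift to $X'$ need be dominant or preserve flatness on the nose, but since $p$ is proper we are free to lift $\rho$ to \emph{any} point over $x$, and choosing a closed point of the fibre keeps the residue field algebraic over $k(x)$, which is all that an embedding into the algebraically closed field $K$ demands; a minor additional care is that the auxiliary schemes $X$ and $X'$ need not be separated of finite type over $S$, which is why the composition formulas of Lemma \ref{lem:composition_formula} must be invoked in their strengthened (verbatim-proof) form.
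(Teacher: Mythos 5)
Your proof is correct and follows essentially the same route as the paper's: reduce by linearity to the case where $\rho$ is a morphism, flatten $\alpha$ by a dominant blow-up via Lemma \ref{flattening}, lift $\rho$ to the blow-up using that $K$ is algebraically closed, and then chain Lemma \ref{normal_dominant} with Lemma \ref{flat_arbitrary}. You merely supply more detail than the paper does on the lifting (choosing a closed point of the fibre) and on the final identification $(\alpha\circ\pi)\circ\rho'=(\alpha\circ\rho)$, which the paper leaves implicit.
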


\begin{proof}
	As in the proof of Lemma \ref{flat_arbitrary}, we may assume that $\rho$ is a morphism $\Spec K\to X$ over $S$.
	By Lemma \ref{flattening}, there is a dominant blow-up $\pi\colon X'\to X$ such that $\alpha\circ \pi = \alpha\otimes_X[X']$ is flat over $X'$.
	Since $K$ is algebraically closed, we can lift $\rho$ to $\rho'\colon \Spec K\to X'$.
	Then we have
	\begin{align*}
		\rho^*\alpha^*	&{}={} {\rho'}^*\pi^*\alpha^*\\
						&{}={} {\rho'}^*(\alpha\circ \pi)^*		&\text{(by Lemma \ref{normal_dominant})}\\
						&{}={} (\alpha\circ \pi \circ \rho')^*	&\text{(by Lemma \ref{flat_arbitrary})}\\
						&{}={} (\alpha\circ \rho)^*.
	\end{align*}
	This completes the proof.
\end{proof}

\begin{theorem}\label{thm:main}
	Let $X,Y,Z$ be noetherian integral schemes over $S$ and $\alpha \in c_S(X,Y)$, $\beta\in c_S(Y,Z)$.
	Suppose that $X,Y$ are normal and $Y,Z$ are separated and of finite type over $S$.
	Then we have
	$$
		\alpha^*\beta^* = (\beta\circ\alpha)^*\colon G(Z)\to G(X).
	$$
	In particular, if $S$ is normal then the canonical transfer defines a structure of a presheaf with transfers over $S$ on $G$.
\end{theorem}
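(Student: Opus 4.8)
The plan is to prove the displayed identity $\alpha^*\beta^* = (\beta\circ\alpha)^*$ by testing it against the algebraic closure of the function field of $X$, where Lemma \ref{normal_arbitrary} together with the associativity of composition of finite correspondences (Lemma \ref{lem:composition_formula}) does all the work; the ``in particular'' clause then follows by reducing to connected schemes.

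First I would let $K$ be an algebraic closure of the function field $k(X)$ and $\rho\colon \Spec K\to X$ the canonical morphism, which is dominant since its image is the generic point of $X$. As $X$ is integral, hence reduced, Lemma \ref{lem:dominant_injective} shows that $\rho^*\colon G(X)\to G(K)$ is injective, so it suffices to prove $\rho^*\alpha^*\beta^* = \rho^*(\beta\circ\alpha)^*$ as maps $G(Z)\to G(K)$. Now I would rewrite both sides using Lemma \ref{normal_arbitrary}. Since $X$ is normal and $Y$ is separated of finite type over $S$, applying that lemma to $\alpha$ and $\rho$ gives $\rho^*\alpha^* = (\alpha\circ\rho)^*$, whence the left-hand side equals $(\alpha\circ\rho)^*\beta^*$. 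Applying Lemma \ref{normal_arbitrary} once more --- this time with source $Y$ (which is normal), target $Z$ (separated of finite type over $S$), the correspondence $\beta\in c_S(Y,Z)$, and $\alpha\circ\rho\in c_S(\Spec K,Y)$ playing the role of $\rho$ --- yields $(\alpha\circ\rho)^*\beta^* = (\beta\circ(\alpha\circ\rho))^*$. On the other hand, $\beta\circ\alpha\in c_S(X,Z)$ since $Z$ is separated of finite type over $S$, and applying Lemma \ref{normal_arbitrary} to $\beta\circ\alpha$ and $\rho$ gives $\rho^*(\beta\circ\alpha)^* = ((\beta\circ\alpha)\circ\rho)^*$. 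Finally, because $Y$ and $Z$ are separated of finite type over $S$, Lemma \ref{lem:composition_formula}(1) gives $(\beta\circ\alpha)\circ\rho = \beta\circ(\alpha\circ\rho)$, so the two expressions coincide and the identity follows from injectivity of $\rho^*$.

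For the last assertion, suppose $S$ is normal. Then every smooth separated $S$-scheme of finite type is a noetherian normal scheme, hence a finite disjoint union of integral normal schemes (its connected components), and $G$ sends disjoint unions to products. Correspondingly, for $X=\bigsqcup_a X_a$ and $Y=\bigsqcup_b Y_b$ one has $c_S(X,Y)=\bigoplus_{a,b}c_S(X_a,Y_b)$, composition of finite correspondences is the induced ``matrix'' composition, and one extends the canonical transfer additively along this decomposition; the identity correspondence of $X$ acts as the identity on $G(X)$ because $f^*$ agrees with the ordinary pullback for a morphism $f$. Functoriality for arbitrary objects of $\Cor_S$ then follows formally from the integral normal case just proved, so $G$ equipped with these transfers is a functor $\Cor_S^{\op}\to\Ab$, i.e.\ a presheaf with transfers over $S$.

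I expect the only real subtlety to be bookkeeping: verifying in each invocation of Lemma \ref{normal_arbitrary} that the relevant source scheme is normal and the target is separated of finite type over $S$ (in particular that $Y$ must serve as both), checking the hypotheses of the associativity lemma, and carrying out the passage from connected schemes to general objects of $\Cor_S$ --- including the standard fact that smoothness over a normal base forces normality of the total space. The core identity itself is a short chase through the above lemmas once those hypotheses are recorded.
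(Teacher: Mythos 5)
Your argument is correct and is essentially the paper's own proof: test against $\rho\colon\Spec K\to X$ for $K$ an algebraic closure of $k(X)$, use injectivity of $\rho^*$ from Lemma \ref{lem:dominant_injective}, and apply Lemma \ref{normal_arbitrary} three times together with associativity from Lemma \ref{lem:composition_formula}. Your extra bookkeeping (the explicit associativity step and the extension to disconnected objects of $\Cor_S$ for the ``in particular'' clause) only spells out what the paper leaves implicit.
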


\begin{proof}
	Let $K$ be an algebraic closure of the function field of $X$ and let $\rho\colon \Spec K\to X$ be the canonical morphism.
	Repeatedly applying Lemma \ref{normal_arbitrary}, we get
	\begin{align*}
		\rho^*\alpha^* \beta^* 	&{}={} (\alpha\circ \rho)^*\beta^*		\\
								&{}={} (\beta\circ \alpha\circ \rho)^*	\\
								&{}={} \rho^*(\beta\circ\alpha)^*.		
	\end{align*}
	Since $\rho^*\colon G(X)\to G(K)$ is injective, this completes the proof.
\end{proof}

\section{Characterization}

In this section we characterize the canonical transfer by a simple condition on radicial transfers.
We continue to fix a noetherian base scheme $S$ and a separated commutative group scheme $G$ over $S$.
First we note the following remarkable property of the canonical transfer.

\begin{lemma}\label{lem:radicial}
	Let $X,Y$ be noetherian integral $S$-schemes and $V\subset X\times_SY$ be an integral closed subscheme finite flat radicial of degree $d$ over $X$.
	Suppose that $Y$ is separated and of finite type over $S$.
	Let $p\colon V\to X$ and $q\colon V\to Y$ be the canonical projections.
	Then for any $g\in G(Y)$, the element $d\cdot q^*g$ can be uniquely written as $p^*h$ for some $h\in G(X)$.
	Moreover, this $h$ coincides with $[V]^*g$.
\end{lemma}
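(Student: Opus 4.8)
The plan is to set $h := [V]^{*}g$ and check directly that $p^{*}h = d\cdot q^{*}g$; uniqueness is then automatic. Since $V$ is finite flat of degree $d\geq 1$ over $X$, the projection $p$ is surjective, hence dominant, and $X$ is integral, hence reduced, so $p^{*}\colon G(X)\to G(V)$ is injective by Lemma~\ref{lem:dominant_injective}; this gives the uniqueness of $h$. Moreover, being finite flat over the integral scheme $X$, the scheme $V$ is finite locally free of constant rank $d$, and since $V$ is reduced the cycle $[V]\in c_{S}(X,Y)$ has multiplicity $1$ along $V$, so $[V]$ falls under case~(1) of the construction of the canonical transfer and $[V]^{*}g = p_{*}(q^{*}g)$. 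Thus it remains only to prove
\[
	p^{*}\bigl(p_{*}(q^{*}g)\bigr) = d\cdot q^{*}g \qquad\text{in } G(V).
\]

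First I would rewrite $p^{*}p_{*}$ as a pushforward along a morphism that visibly admits a section. Consider the Cartesian square whose two projections $\pi_{1},\pi_{2}\colon V\times_{X}V\to V$ are both base changes of $p$, hence finite locally free of degree $d$. Because $Y$ is separated over $S$, the morphism $p$ is separated, so the diagonal $\Delta\colon V\to V\times_{X}V$ is a closed immersion; because $p$ is radicial, $\Delta$ is surjective, so it is a surjective closed immersion, in particular a homeomorphism. Hence $V\times_{X}V$ is irreducible, and $\Delta$ is a section of both $\pi_{1}$ and $\pi_{2}$. Applying Lemma~\ref{lem:base_change} to this square gives $p^{*}p_{*} = {\pi_{1}}_{*}\pi_{2}^{*}$ on $G(V)$, so it suffices to show ${\pi_{1}}_{*}\bigl(\pi_{2}^{*}(q^{*}g)\bigr) = d\cdot q^{*}g$.

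Next I would reduce to the generic point of $V$ and apply Lemma~\ref{lem:reduction_scheme}. Let $\rho\colon\Spec k(V)\to V$ be the inclusion of the generic point; since $V$ is reduced, $\rho^{*}\colon G(V)\to G(k(V))$ is injective by Lemma~\ref{lem:dominant_injective}, so it is enough to check the identity after applying $\rho^{*}$. Base-changing $\pi_{1}$ along $\rho$ produces $W := (V\times_{X}V)\times_{V,\pi_{1}}\Spec k(V)$, which is finite locally free of degree $d$ and radicial over $\Spec k(V)$, hence the spectrum of a local Artinian $k(V)$-algebra of dimension $d$, and in particular connected. Writing $\pi'\colon W\to\Spec k(V)$ and $\tilde\rho\colon W\to V\times_{X}V$ for the projections, the morphism $\Delta\circ\rho$ factors through $W$ and defines a section $s\colon\Spec k(V)\to W$ of $\pi'$ with $\tilde\rho\circ s = \Delta\circ\rho$. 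By Lemma~\ref{lem:base_change} we have $\rho^{*}{\pi_{1}}_{*} = {\pi'}_{*}\tilde\rho^{*}$, and by Lemma~\ref{lem:reduction_scheme} we have ${\pi'}_{*} = d\cdot s^{*}$. Therefore
\[
	\rho^{*}\bigl({\pi_{1}}_{*}\pi_{2}^{*}q^{*}g\bigr) = {\pi'}_{*}\bigl((\pi_{2}\circ\tilde\rho)^{*}q^{*}g\bigr) = d\cdot\bigl(q\circ\pi_{2}\circ\tilde\rho\circ s\bigr)^{*}g.
\]
Since $\tilde\rho\circ s = \Delta\circ\rho$ and $\pi_{2}\circ\Delta = \id_{V}$, the composite $q\circ\pi_{2}\circ\tilde\rho\circ s$ equals $q\circ\rho$, so the right-hand side is $d\cdot\rho^{*}(q^{*}g) = \rho^{*}(d\cdot q^{*}g)$. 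The injectivity of $\rho^{*}$ then yields ${\pi_{1}}_{*}\pi_{2}^{*}q^{*}g = d\cdot q^{*}g$, as desired.

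The only delicate part is the bookkeeping in this double base change: one must check that $\Delta\circ\rho$ genuinely defines a section of $\pi'$ — this is precisely where radicialness of $p$ (via surjectivity of $\Delta$) and separatedness of $p$ (via $\Delta$ being a closed immersion) are used — and that the composite $q\circ\pi_{2}\circ\tilde\rho\circ s$ collapses to $q\circ\rho$. Everything else is a mechanical application of the base-change identity in Lemma~\ref{lem:base_change} together with the ``section'' computation in Lemma~\ref{lem:reduction_scheme}; the underlying geometric fact is simply that a finite flat radicial morphism restricts over each point to a connected (one-point) scheme carrying the diagonal as a canonical section.
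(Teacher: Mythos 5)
Your proof is correct and follows essentially the same route as the paper: uniqueness via Lemma \ref{lem:dominant_injective}, the identity $p^*p_* = {\pi_1}_*\pi_2^*$ from Lemma \ref{lem:base_change}, reduction to a field point of $V$ where the fibre is a connected (local Artinian) radicial scheme carrying the diagonal as a section, and then Lemma \ref{lem:reduction_scheme} plus injectivity of the pullback to the generic point. The only cosmetic difference is that you work over $k(V)$ itself rather than its algebraic closure as the paper does, which is immaterial since Lemma \ref{lem:reduction_scheme} requires no algebraically closed hypothesis; note also that the existence of the section $s$ needs neither radicialness nor separatedness (the diagonal always splits $\pi_1$) --- radicialness is what gives connectedness of $W$, which your argument does correctly supply.
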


We write $t_V(g)$ for the element $h$ appearing in the statement.

\begin{proof}
	The uniqueness follows from Lemma \ref{lem:dominant_injective}.
	We prove that the element $[V]^*g$ satisfies the condition $p^*[V]^*g = d\cdot q^*g$.
	Consider the following diagram, where the left square is Cartesian.
	$$
	\xymatrix{
		V\times_XV\ar[r]^-{\pr_2}\ar[d]^-{\pr_1}		&V\ar[r]	^-{q}\ar[d]^-{p}		&Y\\
		V\ar[r]^-{p}\ar@/^10pt/[u]^-{\Delta}						&X
	}
	$$
	We have $p^*[V]^*g = p^*p_*q^*g = {\pr_1}_*\pr_2^*q^*g$ by Lemma \ref{lem:base_change}, so it suffices to prove that
	$$
		{\pr_1}_*\pr_2^*g'=d\cdot g'
	$$
	holds for any $g'\in G(V)$.
	We claim that the diagonal morphism $\Delta\colon V\to V\times_XV$ satisfies
	$$
		{\pr_1}_*=d\cdot \Delta^*\colon G(V\times_XV)\to G(V).
	$$
	If this claim is proved, then evaluating at $\pr_2^*g'$ we get the desired equality.
	
	Let $K$ be an algebraic closure of the function field of $V$ and let $W=\Spec K\times_XV$.
	Then $W$ is finite radicial of degree $d$ over $\Spec K$, and has a section $\Delta_K\colon \Spec K\to W$ induced by $\Delta$.
	By Lemma \ref{lem:base_change} and the injectivity of $G(V)\to G(K)$, it suffices to prove
	$$
		{\pr_1}_*=d\cdot \Delta_K^*\colon G(W)\to G(K).
	$$	
	This follows from Lemma \ref{lem:reduction_scheme}.
\end{proof}

Actually, the above condition characterizes the canonical transfer.

\begin{theorem}
	Suppose that $S$ is normal and we are given a structure of a presheaf with transfers over $S$ on $G$; write $\alpha^\dagger$ for the map induced by a finite correspondence $\alpha$.
	Assume the following:
	\begin{quote}
		For any connected $X,Y\in \Sm_S$ and for any integral closed subscheme $V\subset X\times_SY$ finite flat radicial over $X$, we have $[V]^\dagger g = t_V(g)$ (see Lemma \ref{lem:radicial}).
	\end{quote}
	Then $\alpha^\dagger=\alpha^*$ holds for any finite correspondence $\alpha$.
\end{theorem}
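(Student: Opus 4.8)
The plan is to reduce, using the additivity of both $\alpha\mapsto\alpha^\dagger$ and $\alpha\mapsto\alpha^*$ together with the functoriality of the two transfer structures (Theorem~\ref{thm:main} on the side of $\alpha^*$), to two elementary building blocks: \emph{radicial} prime correspondences, which are covered directly by the hypothesis together with Lemma~\ref{lem:radicial}, and transposes of graphs of \emph{finite étale} morphisms, which will require an induction on the degree.

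First I would reduce to $\alpha=[V]$ with $X,Y\in\Sm_S$ connected — hence integral, and $X$ normal since $S$ is normal — and $V\subset X\times_SY$ an integral closed subscheme finite (and automatically surjective) over $X$. By Lemma~\ref{lem:dominant_injective}, for any dense open $j\colon U\hookrightarrow X$ the pullback $j^*\colon G(X)\to G(U)$ is injective, and $j^*[V]^\dagger=[V|_U]^\dagger$, $j^*[V]^*=[V|_U]^*$ (functoriality and Lemma~\ref{lem:composition_formula}(2)); so it suffices to treat $[V|_U]$ for a conveniently small $U$. By generic flatness and the structure theory of finite dominant morphisms of integral schemes, after shrinking $U$ I may assume that $V_U:=V\times_XU\to U$ is finite locally free and factors as $V_U\xrightarrow{r}W\xrightarrow{e}U$, where $W$ is the separable part (a connected finite étale cover of $U$ with function field the separable closure of $k(X)$ in $k(V)$), so that $e$ is finite étale and $r$ is finite locally free and radicial. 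Then $W\in\Sm_S$, and since $V_U\hookrightarrow U\times_SY$ is a closed immersion and $W\times_SY\to U\times_SY$ is finite, the induced map $V_U\hookrightarrow W\times_SY$ is again a closed immersion, exhibiting $V_U$ as a prime correspondence $[V_U]_W\in c_S(W,Y)$ with $V_U$ finite flat radicial over $W$.

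For this prime correspondence the hypothesis gives $[V_U]_W^\dagger=t_{V_U}=[V_U]_W^*$, the last equality by Lemma~\ref{lem:radicial}. A routine cycle computation (using Lemmas~\ref{lem:composition_formula} and \ref{lem:pullback_lemma}) gives in $\Cor_S$ the factorization $[V_U]=[V_U]_W\circ{}^t\Gamma_e$, where ${}^t\Gamma_e\in c_S(U,W)$ is the transpose of the graph of $e$; applying $\dagger$ and $*$ and using functoriality, everything reduces to the claim that $({}^t\Gamma_e)^\dagger=({}^t\Gamma_e)^*$ for every finite étale $e\colon W\to X$ of connected smooth $S$-schemes. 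I would prove this by induction on $m=\deg e$, the case $m=1$ being trivial. For $m>1$, étaleness makes the diagonal split off: $W\times_XW=\Delta_W\sqcup W''$ with $W''\to W$ (first projection) finite étale of degree $m-1$, and ${}^t\Gamma_e\circ\Gamma_e=\id_W+[W'']$ in $\Cor_S$. Hence $e^*\circ({}^t\Gamma_e)^\dagger=\id+[W'']^\dagger$ and $e^*\circ({}^t\Gamma_e)^*=\id+[W'']^*$. Each component $V_l\subset W''\subset W\times_SW$ lies in $\Sm_S$ (it is finite étale over $W$, hence smooth and normal), so writing $[V_l]=q_l\circ{}^t\Gamma_{\pi_l}$ with $\pi_l\colon V_l\to W$ the first projection, finite étale of degree $<m$, and $q_l\colon V_l\to W$ the second projection, the inductive hypothesis and functoriality give $[V_l]^\dagger=[V_l]^*$; summing over $l$ yields $[W'']^\dagger=[W'']^*$, so $e^*\bigl(({}^t\Gamma_e)^\dagger-({}^t\Gamma_e)^*\bigr)=0$, and since $e$ is dominant and $X$ reduced, $e^*$ is injective by Lemma~\ref{lem:dominant_injective}; this closes the induction, and with it the proof.

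The hard part will be the finite étale case: the hypothesis only constrains radicial transfers, so there is no direct input for étale correspondences and one must bootstrap upward from degree $1$ using the splitting of an étale cover with a section. A secondary technical point, which dictates the shape of the argument, is that the abstract structure $\dagger$ is only available on $\Sm_S$: unlike for the construction of $\alpha^*$, one cannot flatten $V$ by a blow-up of $X$ (a blow-up need not be smooth) and must instead argue over dense opens, which is why the separable–radicial factorization is only arranged generically.
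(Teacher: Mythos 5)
Your argument is correct, but it takes a genuinely different route from the paper's. The paper never isolates the \'etale part of a correspondence: given $\alpha=\sum_i m_i[V_i]$, it produces by a limit argument (from the separable closure of the function field of $X$) a single connected $U\in\Sm_S$ with an \'etale morphism $\pi\colon U\to X$ over which each $V_i\times_XU$ splits into pieces whose reductions are finite flat radicial over $U$; then $\alpha\circ\pi$ is a $\mathbb{Z}$-linear combination of radicial prime correspondences, the hypothesis gives $(\alpha\circ\pi)^\dagger=(\alpha\circ\pi)^*$ at one stroke, and one concludes because $\pi^\dagger=\pi^*$ is injective by Lemma \ref{lem:dominant_injective} --- no induction, and no statement about $\dagger$ on \'etale transposes is ever needed. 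You instead reduce to a prime correspondence, shrink $X$ to a dense open, factor generically as (radicial over the separable part $W$) composed with ${}^t\Gamma_e$ for $e\colon W\to U$ finite \'etale, settle the radicial piece by the hypothesis together with Lemma \ref{lem:radicial}, and then prove $({}^t\Gamma_e)^\dagger=({}^t\Gamma_e)^*$ by induction on the degree via the splitting $W\times_U W=\Delta_W\sqcup W''$ and injectivity of $e^*$ --- the classical bootstrap for finite \'etale transfers. Both proofs rest on the same pillars (Lemmas \ref{lem:dominant_injective}, \ref{lem:composition_formula}, \ref{lem:pullback_lemma} and Theorem \ref{thm:main}); yours is longer and requires the generic separable--radicial factorization (spreading out the separable closure of $k(X)$ in $k(V)$ and shrinking $U$ so that $V_U\to W$ becomes flat and radicial --- routine over a noetherian normal base since the bad loci in $W$ are proper closed and $W\to U$ is finite, but worth writing out) as well as the cycle identities $[V_U]=[V_U]_W\circ{}^t\Gamma_e$, ${}^t\Gamma_e\circ e=\id_W+[W'']$ and $[V_l]=q_l\circ{}^t\Gamma_{\pi_l}$, all of which do follow from the cited lemmas; in exchange it avoids the limit argument and yields, as a by-product, the explicit identification of $\dagger$ with the canonical transfer on transposes of finite \'etale morphisms, a fact the paper's proof never makes visible.
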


\begin{proof}
	Let $X,Y \in \Sm_S$ be connected and $\alpha\in c_S(X,Y)$.
	We prove that $\alpha^\dagger = \alpha^*\colon G(Y)\to G(X)$ holds.
	Write $\alpha = \sum_{i=1}^nm_i[V_i]$.
	Let $K$ be a separable closure of the function field of $X$.
	Then each $V_i\times_X\Spec K$ is a disjoint union of finite radicial schemes over $\Spec K$, so we can write
	$$
		V_i\times_X\Spec K = \textstyle\coprod_{j=1}^{n_i} \Spec A_{ij}
	$$
	where $A_{ij}$ is a finite local $K$-algebra such that the residue field $L_{ij}$ is purely inseparable over $K$.
	We conclude by limit argument that there is some connected $U\in \Sm_S$ and an \'etale morphism $\pi\colon U\to X$ such that
	$$
		V_i\times_X U = \textstyle\coprod_{j=1}^{n_i} W_{ij},
	$$
	where $W_{ij}$ is an irreducible closed subscheme of $U\times_SY$ whose reduction is finite flat radicial over $U$.
	By Lemma \ref{lem:composition_formula} (2) and Lemma \ref{lem:pullback_lemma} we have $\alpha\circ \pi = \sum_{i=1}^n\sum_{j=1}^{n_i} m_{ij}[(W_{ij})_\red]$ for some $m_{ij}$, so we have $(\alpha\circ \pi)^\dagger = (\alpha\circ \pi)^*\colon G(Y)\to G(U)$ by our assumption.
	Since $\pi^\dagger = \pi^*\colon G(X)\to G(U)$ is injective by Lemma \ref{lem:dominant_injective}, we get $\alpha^\dagger = \alpha^*$.
\end{proof}

\printbibliography

\end{document}